\newtheorem{proposition}{Proposition}[section]
\newtheorem{theorem}[proposition]{Theorem}
\newtheorem{corollary}[proposition]{Corollary}
\newtheorem{lemma}[proposition]{Lemma}
\newtheorem{remark}[proposition]{Remark}
\newtheorem{definition}[proposition]{Definition}
\newtheorem{example}[proposition]{Example}
\newcommand{\nc}{\newcommand}
\nc{\I}{{\mathbf 1}}
\nc{\bN}{{\mathbf N}}
\nc{\bM}{{\mathbf M}}
\nc{\cB}{{\mathcal B}}
\nc{\cX}{{\mathcal X}}
\nc{\cS}{{\mathcal S}}
\nc{\cL}{{\mathcal L}}
\nc{\R}{{\mathbb R}}
\nc{\N}{{\mathbb N}}
\nc{\Z}{{\mathbb Z}}
\nc{\X}{{\mathbb X}}
\nc{\Y}{{\mathbb Y}}
\nc{\sS}{{\mathbb S}}
\nc{\na}{{\rm NA}}
\nc{\pa}{{\rm PA}}
\def\1{\mathbf{1}}
\newcommand{\md}{\text{d}}
\nc{\BP}{\mathbb{P}}
\nc{\BE}{\mathbb{E}}
\nc{\BQ}{\mathbb{Q}}
\DeclareMathOperator{\BC}{{\mathbb Cov}}
\numberwithin{equation}{section}
\newcommand{\remove}[1]{}
\begin{document}

\renewcommand{\thefootnote}{\fnsymbol{footnote}}
\author{G. Last\footnotemark[1]\,, R. Szekli\footnotemark[2]\,
and D. Yogeshwaran\footnotemark[3]}
\footnotetext[1]{guenter.last@kit.edu, Karlsruhe Institute of
  Technology, Institute for Stochastics, 76131 Karlsruhe, Germany. }
\footnotetext[2]{szekli@math.uni.wroc.pl, University of Wroc{\l}aw, Mathematical Institute,
50-384, Wroc{\l}aw, Poland. Work supported by National Science Centre, Poland, grant NCN no 2015/19/B/ST1/01152}
\footnotetext[3]{d.yogesh@isibang.ac.in, Theoretical Statistics and
  Mathematics Unit, Indian Statistical Institute, Bangalore, India. }

\title{Some remarks on associated random fields, random measures and point processes}
\date{\today}
\maketitle
\begin{abstract}
\noindent  In this paper, we first show that for a countable family of random
  elements taking values in a partially ordered Polish space with a closed partial order (POP space),
  association (both positive and negative) of all finite dimensional
  marginals implies that of the infinite sequence. Our proof proceeds
  via Strassen's theorem for stochastic domination and thus avoids the
  assumption of normally ordered on the product space as needed for
  positive association in \cite{Lindqvist88}. 
  We use these results to show on POP spaces that finite dimensional negative
  association implies negative association of the random measure and
  negative association is preserved under weak convergence of random
  measures. The former provides a simpler proof in the most general
  setting of Polish spaces complementing the recent proofs in
  \cite{poinas2017mixing} and \cite{lyons2014determinantal} which
  restrict to point processes in $\R^d$ and locally compact Polish
  spaces respectively. We also provide some examples of associated
  random measures which shall illustrate our results as well.
\end{abstract}
\noindent {\bf Keywords:} negative association, positive association,
random fields, random measures, point processes,
weak convergence, Gaussian random fields, Poisson processes, Cox
processes, Poisson cluster processes
determinantal point processes, Gibbs point processes. 

\vspace{0.1cm}
\noindent
{\bf AMS MSC 2010:} 60E15, 60G57.
\section{Introduction}
Positive association of random vectors in $\R^d$ appears in Esary et
al \cite{epw} in 1967, and negative association several years later, see
Joag-Dev and Proschan \cite{Joag-Dev1983} or Alam and Lai Saxena \cite{alam}. Since then the theory of
positive association has been well developed and has found many
applications in various contexts, for example to 
establish limit theorems,
to obtain concentration bounds or to derive stochastic comparison
results. Association of real random fields on $\Z^d$ and $\R^d$ were
used to obtain central limit theorems for
random fields, see e.g. Bulinski and Shashkin \cite{bulinski-sh}, Bulinski
and Spodarev \cite{bulinski-sp} or Poinas et al. \cite{poinas2017mixing} and references therein. We shall give examples of associated (positive and
negative) random fields later in Section \ref{sec:examples} and in the
Appendix. Positive association of probability measures on partially
ordered Polish spaces was studied by Lindqvist \cite{Lindqvist88},
where infinite products of such spaces and some space of
functions with values in partially ordered Polish spaces were
characterized by the corresponding finite dimensional distributions
under the additional assumption that the product space is normally
ordered.
Inspired by one of the proofs in Georgii and Yoo \cite{GeorYoo05}, we use
Strassen's theorem on stochastic domination to
prove the characterization by finite dimensional distributions for both positive and negative association for
countable families of random elements of general partially ordered Polish
spaces (see Theorem \ref{t2.1}), which generalizes Theorem 5.1 in Lindqvist \cite{Lindqvist88}. Using this idea, we characterize negative association by bounded, continuous, non-decreasing functions (Lemma \ref{t:suff_condn_NA}) and also show that association for countable families of random elements in partially ordered Polish spaces is preserved under weak convergence (Theorem \ref{t:weak_conv}).

A special case of partially ordered Polish spaces is that of the space
of all locally finite measures and in particular the space of locally
finite counting measures. These two spaces are of importance in the theory
of random measures and point processes.  Positive association of
random measures and point processes on locally compact Polish spaces
were characterized by the corresponding finite dimensional
distributions by Kwiecinski and Szekli \cite[Theorem
3.2]{kwiecinski-szekli96}. Using our Theorem \ref{t2.1}, we will prove
an analogous characterization (by finite dimensional distributions) of
negative association for random measures on Polish
spaces (Theorem \ref{measurena}). Similar results on negative
association for point processes on $\R^d$ and on locally compact
Polish spaces has been recently given by Poinas et al. \cite[Theorem
2.3]{poinas2017mixing} and Lyons \cite[paragraph
3.7]{lyons2014determinantal} respectively. Though the latter result is
in the context of determinantal point processes, the proof applies to
general negatively associated point processes. We will compare these
theorems in more detail in Section \ref{randommeasures}.  We will
extend these results into the context of random measures and will also
relax a rather restrictive assumption of local compactness on the
ground space. We use this along with Theorem \ref{t:weak_conv} to show
that weak convergence of random measures also preserves negative
association (Theorem \ref{t:NA_limit}). Apart from giving a very
general characterization of negatively associated random elements, our
result opens new possibilities, for example to obtain central limit
theorems for associated random measures in a quite general
context. Our results allow to extend several association properties of
countable random fields known only for finite dimensional
distributions into the setting of infinite sequences as well as
generate new examples of negatively associated random measures (see
Section \ref{sec:examples}).


We end the introduction with a brief discussion 
of the theory of
negative dependence.  Though negative association was introduced in Joag-Dev and Proschan \cite{Joag-Dev1983} and Alam and Lai Saxena \cite{alam} in the context of reliability models,
it garnered significant additional interest following the article of Pemantle
\cite{Pemantle2000} in which he confined himself to binary-valued
random variables. The list of examples that motivated him to develop
techniques for proving that measures have negative dependence
properties such as negative association  or strong Rayleigh property include uniform random spanning trees, simple exclusion processes, random cluster models and
the occupation status of competing urns.  Among various definitions
expressing negative dependence, negative association seems to be one
of the easier conditions to verify and has also found
applications. Negative association has one distinct advantage over the
other types of negative dependence, namely, non-decreasing functions
of disjoint sets of negatively associated random variables are also
negatively associated. This closure property does not hold for the
other types of negative dependence. There exists nothing like a
general theory of negative association on partially ordered Polish
spaces, no reasonable analogy to the theory of positive association is
visible. New examples of negatively associated point processes and
random measures are given in Last and Szekli
\cite{last-szeklina} along with some stochastic comparisons of dependence. Positive association properties proved for many
interacting particle systems stay in contrast with the lack of
negative association results for most 
interacting particle
systems. A property related to negative association, known to hold for
symmetric exclusion processes is the strong Rayleigh property
(stronger than negative association) which is preserved in the
evolution of the symmetric exclusion process (see Borcea et al.
\cite[Theorem 5.2]{borceabl}).

The article is organized as follows. We introduce partially ordered
Polish spaces and Strassen's theorem in Section \ref{sec:prelims} and
then present our results about countable family of associated random
elements in Section \ref{sec:assoc_random_fields}. We then state and
prove our results on associated random measures in Section
\ref{randommeasures} and conclude with various (old and new) examples of associated
random elements in Section \ref{sec:examples}. At the end of this
paper, in an appendix, we present some additional examples which are
directly related to some applied stochastic models in order to gain a
broader view over this field. All formulations in the listed examples are in a strong sense PA and NA
as given in Definitions \ref{asso} and \ref{na-rm}. In many cases the exisiting results are known only for finite dimensional vectors but
we extend this to the infinite-dimensional vectors using our results.

\section{Preliminaries}
\label{sec:prelims}

Let $\X$ be a Polish space endowed with a partial ordering
$\preceq$. A real-valued function $f$ on $\X$ is said to be
non-decreasing if $x\preceq y$ implies $f(x)\le f(y)$. We let $\cX $
denote the Borel $\sigma$-field on $\X$. 
For
probability measures $P$ and $P'$ on $(\X, \cX)$, $P$ is
stochastically dominated by $P'$ if
$$
\int f dP \le  \int fdP'
$$
for all non-decreasing bounded measurable $f$. 
In this case we write
$P\preceq_{st}P'$. We assume that the partial ordering $\preceq$ is
closed, i.e., the set $H = \{(x, x'); x \preceq x'\}$ is closed in the
product topology on $\X^2$.
For the reader's convenience we state
the classical (Strassen) theorem on stochastic domination:

\begin{theorem}\label{strassen}
  $P$ and $P'$ satisfy $P\preceq_{st}P'$ iff there exists a
  probability measure $\tilde P$ on $(\X^2, \cX^2)$ with marginals $P$
  and $P'$ such that $\tilde P(H)= 1$.
\end{theorem}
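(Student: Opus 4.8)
The plan is to prove the two implications separately, with essentially all the work concentrated in the construction of the coupling.

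The sufficiency direction is immediate. Suppose such a $\tilde P$ exists. For any bounded measurable non-decreasing $f$, the function $(x,x') \mapsto f(x') - f(x)$ is non-negative on $H$, so, since $\tilde P(H)=1$,
$$
\int f \, dP' - \int f \, dP = \int \big(f(x') - f(x)\big)\,\tilde P(d(x,x')) \ge 0,
$$
which is exactly $P \preceq_{st} P'$.

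For the necessity direction I would first recast the stochastic order as a family of set inequalities. Call a Borel set $U \subseteq \X$ an \emph{upper set} if $x \in U$ and $x \preceq y$ imply $y \in U$; its indicator $\I_U$ is then a bounded measurable non-decreasing function, so $P \preceq_{st} P'$ yields $P(U) \le P'(U)$ for every upper set $U$. Consequently, writing $\uparrow\! A := \{y : x \preceq y \text{ for some } x \in A\}$, we obtain for every (say closed) $A$ that $P(A) \le P(\uparrow\! A) \le P'(\uparrow\! A)$, the last step applying the upper-set inequality to $U = \uparrow\! A$. This is precisely the Hall-type feasibility condition for transporting the mass of $P$ into that of $P'$ along the admissible pairs collected in $H$.

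The substantive step is then to convert this feasibility condition into an actual coupling concentrated on $H$. The set $\Pi(P,P')$ of probability measures on $\X^2$ with marginals $P$ and $P'$ is tight (both marginals are tight by Polishness) and hence weakly sequentially compact by Prokhorov's theorem. My plan is to produce approximate couplings by discretization: partition $\X$ into finitely many small Borel cells, so that the marginal constraints and the relation $H$ descend to a finite bipartite transportation problem whose finite feasibility condition is supplied by the set inequality above; the classical finite Strassen / max-flow--min-cut (Hall marriage) theorem then provides a feasible finite coupling. Refining the partition and pushing these couplings back to $\X^2$ gives a sequence in $\Pi(P,P')$, from which I extract a weak limit $\tilde P \in \Pi(P,P')$. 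Since $H$ is closed, $H^c$ is open and $\pi \mapsto \pi(H^c)$ is weakly lower semicontinuous, so the vanishing of the off-$H$ mass along the approximating sequence forces $\tilde P(H) = 1$.

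The main obstacle is exactly this last limiting argument: one must design the discretization so that the finite admissibility relation approximates the closed set $H$ from outside without destroying feasibility, and then control the residual off-$H$ mass in the weak limit via the closedness of $H$. An alternative route that bypasses the combinatorics is functional-analytic: existence of $\tilde P$ is the solvability of a linear moment problem, and a Hahn--Banach separation argument on bounded continuous functions, fed by the upper-set inequalities above, produces the coupling directly; there the delicate point is the regularity needed to pass from continuous test functions back to the closed constraint set $H$.
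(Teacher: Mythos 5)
First, a point of comparison: the paper does not prove this theorem at all. It is quoted ``for the reader's convenience'' as the classical Strassen theorem, with the proof delegated to the literature (Strassen \cite{strassen}, the explicit formulation in \cite{kamaeko}, and Lindvall \cite{lindvall}). So your attempt can only be measured against those standard proofs. Your sufficiency direction (coupling on $H$ implies $P\preceq_{st}P'$) is complete and correct. The necessity direction, however, is a roadmap rather than a proof, and the roadmap has concrete holes exactly where the standard proofs do their work.

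Three gaps, in decreasing order of severity. (1) A general Polish space is not totally bounded, so you cannot ``partition $\X$ into finitely many small Borel cells'': already for $\X=\R$ there is no finite partition into sets of diameter less than $1$. The standard repair (this is what Lindvall's proof does) is to first use tightness of $P$ and $P'$ to fix a compact $K$ carrying mass at least $1-\delta$ under both, discretize only $K$, and treat $\X\setminus K$ as an exceptional cell; but then the Hall/max-flow feasibility condition must be re-derived for the perturbed finite marginals, and the off-$H$ mass of order $\delta$ must be tracked through the weak limit. None of this appears in your sketch, and it is the heart of the argument. (2) Your feasibility condition uses $P(A)\le P(\uparrow\! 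A)\le P'(\uparrow\! A)$ for closed $A$, but $\uparrow\! A=\pi_2\bigl(H\cap(A\times\X)\bigr)$ is a projection of a closed set, hence in general only analytic, not Borel; since $\preceq_{st}$ is defined via bounded measurable non-decreasing functions, you need either a universal-measurability extension or to restrict to compact $A$, for which $\uparrow\! A$ is genuinely closed (projection along a compact factor is a closed map). (3) The limiting step you defer does in fact work --- with cells of diameter at most $\epsilon_k$ the $k$-th coupling is carried by the closed enlargement $\overline{H_{\epsilon_k}}$, the Portmanteau theorem gives $\tilde P(\overline{H_{\epsilon}})=1$ for each fixed $\epsilon>0$, and $H=\bigcap_{\epsilon>0}\overline{H_\epsilon}$ because $H$ is closed --- but you explicitly flag it as ``the main obstacle'' without carrying it out, and the same goes for the alternative Hahn--Banach route you mention. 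As it stands you have correctly identified the architecture of the Lindvall-type proof (finite transportation problem plus tightness plus closedness of $H$), proved the easy half, and left the hard half unproved.
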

This result is often referred to as ``Strassen's Theorem", which is
formally misleading: in Strassen \cite{strassen} it is only briefly
mentioned as one possible application of Theorem 11 in that paper, and
the condition $P\preceq_{st}P'$ does not appear explicitly. An
explicit formulation can be found in \cite[Theorem 1]{kamaeko}. For a nice proof of
Theorem \ref{strassen} and some additional observations, see Lindvall
\cite{lindvall}. It is known (see \cite[Theorem 2]{kamaek}) that the
relation $\preceq_{st}$ on the space of probability measures on
$(\X, \cX)$ with the topology of weak convergence is a closed partial
ordering.
\section{Association of discrete random fields}
\label{sec:assoc_random_fields}

Let $I$ be countable index set (e.g.\ $I=\{1,\ldots,n\}$, $I=\Z^d$, $I=\N$).  Let ${\mathbf X}=(X_i)_{i\in I}$ be a random field, that
is a family of random elements with values
in a partially ordered Polish (POP) space $(\X,\cX)$.
For $J\subset I$, we write $X_J:=(X_i)_{i\in J}$.

\begin{definition}\label{asso}\rm
For a family ${\mathbf X}=(X_i)_{i\in I}$ of random elements of $(\X, \cX)$
\begin{itemize}
\item[(i)]
we say that ${\mathbf X}$ is negatively associated
(\na) if
\begin{align}\label{NA}
\BE[f(X_J)g(X_{J^c})]\le \BE[f(X_J)]\BE[g(X_{J^c})]
\end{align}
for all $J\subset I$ and for all (coordinatewise) non-decreasing bounded
measurable $f\colon \R^J\to\R$ and $g\colon \R^{J^c}\to\R$
for which the expectations in \eqref{NA} exist,  where $\R^J$ denotes the space of all real functions defined on $J$;
\item[(ii)]
we say that ${\mathbf X}$ is positively associated
(\pa) if
\begin{align}\label{PA}
\BE[f(X_J)g(X_{J})]\ge \BE[f(X_J)]\BE[g(X_{J})]
\end{align}
for all $J\subset I$ and for all (coordinatewise) non-decreasing bounded
measurable $f\colon \R^J\to\R$ and $g\colon \R^{J}\to\R$
for which the expectations in \eqref{PA} exist, where $\R^J$ denotes the space of all real functions defined on $J$.
\end{itemize}
\end{definition}
\begin{remark}\rm\label{rem1} Association for uncountable index sets $I$ can be defined as follows.
For a family ${\mathbf X}=(X_i)_{i\in I}$ of random elements of
  $(\X, \cX)$ we say that ${\mathbf X}$ is positively associated (\pa)
  if
\begin{align}\label{UCPA}
\BE[f(X_J)g(X_{J})]\ge \BE[f(X_J)]\BE[g(X_{J})]
\end{align}
for all countable $J\subset I$ and for all (coordinatewise)
non-decreasing bounded measurable $f\colon \R^J\to\R$ and
$g\colon \R^{J}\to\R$ for which the expectations in \eqref{UCPA} exist.
Similarly, one can define \na \, using disjoint countable index sets
$J,J'\subset I$.
\end{remark}

By the well-known formula
\begin{align}
\label{e:covid}
\BC[X,Y] & =\int \BC[\I\{X>s\},\I\{Y>t\}]\,d(s,t),
\end{align}
valid for all integrable random variables $X$ and $Y$ with
$\BE[|XY|]<\infty$, it is enough to assume in $\eqref{NA}$ that $f$
and $g$ are non-negative. The above identity can be found in the proof
of \cite[Lemma 2]{Lehmann66} which the author attributes to
\cite{Hoeffding40}.

We say that a family ${\mathbf X}=(X_i)_{i\in I}$ of random elements of $(\X, \cX)$ is associated if it is \pa \, or \na \,. In our proofs we will concentrate on the \na \, case, only pointing out how to deal with the \na \, case.

We now state and prove one of our main theorems showing
that NA property of finite dimensional marginals implies that of the
infinite sequence. Our proof was inspired by the proof
of \cite[Corollary 3.4]{GeorYoo05}.
\begin{theorem}
\label{t2.1}
Consider a discrete family ${\mathbf X}=(X_n)_{n\in I}$ of random
elements of POP space $(\X, \cX)$. Assume that for each finite
$J\subset I$ the finite subfamily $X_J$ is associated . Then the family
${\bf X}$ is associated in the same positive or negative way as finite subfamilies.
\end{theorem}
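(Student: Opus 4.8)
The plan is to reduce the infinite-dimensional association inequalities to the finite-dimensional ones, which hold by hypothesis, through a stochastic-domination argument in which Theorem \ref{strassen} supplies the couplings. I focus on the \na\ case; the \pa\ case is entirely parallel, with the direction of domination reversed.

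First I would isolate the engine of the whole argument as a lifting lemma for stochastic domination: if $S$ is a countable index set and $P,Q$ are probability measures on the product POP space $\X^S$ with $P^F\preceq_{st}Q^F$ for every finite $F\subset S$ (the superscript denoting the marginal on the coordinates in $F$), then $P\preceq_{st}Q$. To prove it, enumerate $S$ and write $[n]=\{1,\dots,n\}$; for each $n$, Theorem \ref{strassen} applied on $\X^{[n]}$ gives a coupling of the two $n$-dimensional marginals concentrated on $\{x_{[n]}\preceq y_{[n]}\}$, and splicing in the regular conditional laws on the remaining coordinates (which exist since the spaces are Polish) extends it to a coupling $\tilde R_n$ of $P$ and $Q$ on $(\X^S)^2$ that is concentrated on the closed set $H^{(n)}=\{(x,y):x_i\preceq y_i,\ i\le n\}$. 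Since $P$ and $Q$ are tight, the family $\{\tilde R_n\}$ is tight, so along a subsequence $\tilde R_{n_k}\to R$ weakly; $R$ still has marginals $P$ and $Q$, and by the Portmanteau theorem $R(H^{(m)})\ge\limsup_k\tilde R_{n_k}(H^{(m)})=1$ for each fixed $m$, whence $R$ is concentrated on $H=\bigcap_m H^{(m)}$. Applying Theorem \ref{strassen} in the reverse direction yields $P\preceq_{st}Q$.

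Next I would recast association as stochastic domination so that the lemma applies. By \eqref{e:covid} it suffices to treat non-negative bounded non-decreasing $f,g$; discarding the trivial case $\BE[g(X_{J^c})]=0$, form the tilted law $Q^{(g)}$ of $X_J$ given by $Q^{(g)}(\cdot)=\BE[\1\{X_J\in\cdot\}\,g(X_{J^c})]/\BE[g(X_{J^c})]$. A one-line computation shows that \eqref{NA} for all $f$ (with $g$ fixed) is exactly $Q^{(g)}\preceq_{st}\mathcal L(X_J)$ on $\X^J$ (for \pa, \eqref{PA} becomes $\mathcal L(X_J)\preceq_{st}\mathcal L(X_J)_g$, with the tilt on the same block). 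I would then deduce this domination by two nested applications of the lifting lemma. First, for a \emph{fixed} finite $A\subset J$ and non-decreasing bounded $\phi\ge0$ on $\X^A$, tilting $\mathcal L(X_{J^c})$ by $\phi(X_A)$ reformulates the semi-infinite inequality $\BE[\phi(X_A)g(X_{J^c})]\le\BE[\phi(X_A)]\BE[g(X_{J^c})]$ (for all $g$) as a domination on $\X^{J^c}$, and the lemma reduces it to its analogue over finite $B\subset J^c$, which is just \na\ of the finite subfamily $X_{A\cup B}$ with partition $A\mid B$. Second, the lemma on $\X^J$ reduces $Q^{(g)}\preceq_{st}\mathcal L(X_J)$ precisely to those semi-infinite inequalities over finite $A\subset J$. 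Chaining the two liftings lands every instance on a genuinely finite-dimensional association inequality, which holds by assumption.

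The main obstacle is the lifting lemma, specifically the construction of the infinite-dimensional coupling. The finite-dimensional Strassen couplings are \emph{not} consistent as $n$ grows, so Kolmogorov extension is unavailable and one must instead use the tightness/weak-limit argument; the delicate point there is that passing to a weak limit could destroy the order constraint. This is exactly where the standing hypothesis that $\preceq$, hence each $H^{(m)}$, is closed becomes essential: closedness lets the Portmanteau inequality for closed sets preserve the constraint in the limit, and it is what allows the proof to dispense with the normally-ordered assumption of \cite{Lindqvist88}. Secondary points, routine in the Polish setting, are the existence and measurability of the spliced conditional kernels and the verification that the weak limit retains the marginals $P$ and $Q$.
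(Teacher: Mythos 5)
Your proof is correct, and it runs on the same engine as the paper's: tilting by one of the two functions to recast \eqref{NA} as a stochastic domination statement, Strassen's theorem (Theorem \ref{strassen}) to produce couplings, tightness plus the Portmanteau theorem and closedness of the order to pass to the limit, and the same two-stage reduction (finite block against countable block first, then countable against countable). The genuine difference is in how the couplings on the infinite product are constructed, and it is worth recording. The paper pads the truncated complement block with a fixed element $z\in\X$, so that the approximating tilted and untilted laws $\mu_{n,J}$, $\nu_{n,J}$ live on $\X^\N$ from the outset; Strassen is applied directly on the infinite product, and one must then verify that these \emph{moving} marginals converge weakly to the target laws (via \cite[Theorem 4.29]{Kallenberg02}) before extracting a subsequential limit of the couplings. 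You instead isolate a self-contained lifting lemma --- domination of all finite-dimensional marginals implies domination on the countable product, which is essentially the statement the paper gestures at when citing \cite[Proposition 3]{kamaeko} --- proved by applying Strassen on $\X^{[n]}$ and splicing in regular conditional kernels, so that every coupling $\tilde R_n$ has the \emph{exact} marginals $P$ and $Q$ and only the constraint set $H^{(n)}$ varies with $n$. Your route buys fixed marginals (no weak-convergence check for the approximating laws is needed) and a reusable lemma that turns the two-stage reduction into bookkeeping; the paper's padding trick buys freedom from regular conditional distributions. Both arguments hinge identically on closedness of $\preceq$ at the Portmanteau step. One minor point you should make explicit: in the first application of your lemma, the trivial case $\BE[\phi(X_A)]=0$ must be discarded before tilting, exactly as you did for $g$.
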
%
\noindent {\em Proof:} We prove the \na \, case. In order to check \eqref{NA}, let us first
assume that $J\subset I$ is finite and because of \eqref{e:covid}, let
$f\colon \R^J\to[0,\infty)$ be non-decreasing and such that $\BE[f(X_J)]<\infty$. It is
no restriction of generality to assume that
$\BE[f(X_J)]>0$. (Otherwise we have that $\BP(f(X_J)=0)=1$ and
\eqref{NA} becomes trivial.) Since we assumed that $I$ is discrete, we
can enumerate elements of $I$ and assume that $J=\{1,\ldots,m\}$ for
some $m\in\N$.  For $n\in\N$, we define a random element
$X^{(n)}_J=(X^{(n)}_k)_{k\ge 1}$ of $\X^\N$ by $X^{(n)}_k:=X_{m+k}$
for $k\in\{1,\ldots,n\}$ and $X^n_k:=z$ for $k\notin\{1,\ldots,n\},$
for a fixed element $z\in \X$.  By our assumption, we have that for
all $n\in \N$
\begin{align*}
\BE[f(X_J)g(X^{(n)}_J)]\le \BE[f(X_J)]\BE[g(X^{(n)}_J)]
\end{align*}
for all measurable non-decreasing $g\colon \X^\N\to\R$
such that $\BE[|g(X_J^{(n)})|]<\infty$. But this means
that
\begin{align}\label{st}
\mu_{n,J}\preceq_{st}\nu_{n,J},\quad n\in\N,
\end{align}
where $\mu_{n,J}:=\BE[f(X_J)]^{-1}\BE[f(X_J)\I\{X^{(n)}_J\in\cdot\}]$,
$\nu_{n,J}:=\BP(X^{(n)}_J\in\cdot)$ and $\preceq_{st}$ denotes
strong stochastic ordering of probability measures on $\X^\N$
(w.r.t.\ coordinatewise $\preceq $ ordering). Moreover, the set
$$
H:=\{(x,y)\in \X^\N\times\X^\N:x\preceq y\}
$$
is closed w.r.t.\ the product topology on $\X^\N\times\X^\N$.
By Strassen's theorem there exists for each $n\in\N$ a
probability measure $\gamma_n$ on $\X^\N\times\X^\N$ with
marginals $\mu_n$ and $\nu_n$, respectively, such that
$\gamma_n(H)=1$.

By \cite[Theorem 4.29]{Kallenberg02}
we have that $\mu_{n,J}\overset{d}{\to}\mu_J$
as $n\to\infty$,
where
\begin{align*}
\mu_J:=\BE[f(X_J)]^{-1}\BE[f(X_J)\I\{X_J^\infty\in\cdot\}].
\end{align*}
Similarly, $\nu_{n,J}\overset{d}{\to}\nu_J:=\BP(X_J^\infty\in\cdot)$,
where $X^\infty_k:=X_{m+k}$, $k\in\N$.

Now we use a similar argument as in \cite[Proposition 3]{kamaeko}.
By  \cite[Theorem 16.3]{Kallenberg02} we have  that
the sequences $(\mu_n)$ and $(\nu_n)$ are tight.
Since $\gamma_n$ has marginals $\mu_n$ and $\nu_n$ we have
for any measurable $A,B\subset\X^\N$ that
\begin{align*}
\gamma_n((A\times B)^c)=\gamma_n(A^c\times B)+\gamma_n(A\times B^c)+\gamma_n(A^c\times B^c)
\le 2\mu_n(A^c)+\nu_n(B^c).
\end{align*}
Therefore the sequence $(\gamma_n)$ is also tight. Let
$\gamma$ be a subsequential limit.
Since $H$ is closed, the Portmanteau theorem shows
that $\gamma(H)=1$. By definition of weak convergence,
$\gamma$ has marginals $\mu$ and $\nu$, respectively.
But this implies that $\int g\,d\mu \le\int g\,d\nu$
for all measurable non-decreasing bounded $g\colon\X^\N\to[0,\infty)$,
so that \eqref{NA} follows.


Finally we take an arbitrary $J\subset\N$. By the first step of the
proof we have
\begin{align*}
\BE[f(X_J)g(X_{J'})]\le \BE[f(X_J)]\BE[g(X_{J'})]
\end{align*}
for all finite sets $J'\subset\N\setminus J$, for all non-decreasing
measurable bounded $f\colon \X^J\to\R$ and $g\colon \X^{J'}\to\R$.
Repeating the above arguments yields  \eqref{NA} in full generality.

The proof for the PA case can be done in a similar way.
\qed

\bigskip

Our proof technique gives the following  corollary.
\begin{corollary}
\label{c:NA_2_Seq}
Suppose ${\mathbf X}=(X_n)_{n\in I}$ and ${\mathbf Y}= (Y_n)_{n\in J}$
are two discrete families of random elements of POP space $(\X,
\cX)$.
Assume that for all finite $I',J'$ and non-decreasing bounded
measurable $f,g$, it holds that
$$ \BE[f(X_{I'})g(Y_{J'})] \leq ( \geq )\BE[f(X_{I'})]\BE[g(Y_{J'})].$$
Then, for all non-decreasing bounded measurable $f,g$ and countable $I$, $J$, we have that
$$ \BE[f(X_{I})g(Y_{J})] \leq (\geq)\BE[f(X_{I})]\BE[g(Y_{J})].$$
\end{corollary}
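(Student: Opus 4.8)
The plan is to mimic the proof of Theorem \ref{t2.1} almost verbatim, the only difference being that we now have two genuinely separate families $\mathbf{X}$ and $\mathbf{Y}$ rather than a single family split into $X_J$ and $X_{J^c}$. I would first reduce to the case where the test functions are non-negative via the covariance identity \eqref{e:covid}, and fix a finite set $I'\subset I$ together with a non-decreasing bounded measurable $f\colon\X^{I'}\to[0,\infty)$ with $\BE[f(X_{I'})]\in(0,\infty)$ (the degenerate case being trivial). Since $J$ is countable we enumerate it as $J=\{1,2,\ldots\}$ and, for each $n\in\N$, form the truncated random element $Y^{(n)}=(Y^{(n)}_k)_{k\ge 1}$ of $\X^\N$ by setting $Y^{(n)}_k:=Y_k$ for $k\le n$ and $Y^{(n)}_k:=z$ for $k>n$, where $z\in\X$ is a fixed reference point.

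The key step is to invoke the hypothesis on finite index sets: for every $n$ and every non-decreasing bounded measurable $g\colon\X^\N\to\R$ depending only on the first $n$ coordinates, the finite-dimensional assumption gives
\begin{align*}
\BE[f(X_{I'})g(Y^{(n)})]\le \BE[f(X_{I'})]\BE[g(Y^{(n)})].
\end{align*}
Exactly as before, this rewrites as the stochastic domination $\mu_{n}\preceq_{st}\nu_{n}$ on $\X^\N$, where $\mu_{n}:=\BE[f(X_{I'})]^{-1}\BE[f(X_{I'})\I\{Y^{(n)}\in\cdot\}]$ and $\nu_n:=\BP(Y^{(n)}\in\cdot)$, and Strassen's theorem (Theorem \ref{strassen}) produces couplings $\gamma_n$ on $\X^\N\times\X^\N$ supported on the closed set $H$. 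I would then run the identical tightness and Portmanteau argument: by \cite[Theorem 4.29]{Kallenberg02} the marginals converge, $\mu_n\overset{d}{\to}\mu$ and $\nu_n\overset{d}{\to}\nu$ with $\nu=\BP(Y_J\in\cdot)$ and $\mu=\BE[f(X_{I'})]^{-1}\BE[f(X_{I'})\I\{Y_J\in\cdot\}]$; the bound on $\gamma_n((A\times B)^c)$ forces $(\gamma_n)$ to be tight; any subsequential limit $\gamma$ is supported on $H$ and has marginals $\mu,\nu$; hence $\mu\preceq_{st}\nu$, which is precisely the desired inequality for finite $I'$ and countable $J$.

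The final step is to extend from finite $I'$ to countable $I$. Having established the inequality for all finite $I'$ and all countable $J$, I would repeat the whole truncation-Strassen-tightness machinery a second time, now truncating the $\mathbf{X}$-family and treating the (already infinite-dimensional but fixed) function of $Y_J$ as the dominated side, to obtain the claim for countable $I$ as well; this is the analogue of the ``arbitrary $J$'' paragraph at the end of the proof of Theorem \ref{t2.1}. The $(\ge)$ case for positive association is handled by the same argument with inequalities reversed, as noted there.

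I expect the only real subtlety to be bookkeeping rather than genuine difficulty: one must be careful that the reference element $z$ and the truncation are chosen so that the maps $g\mapsto g(Y^{(n)})$ really do land among non-decreasing functions on $\X^\N$ and that the convergence $\mu_n\overset{d}{\to}\mu$ holds coordinatewise in the product topology. Since $\mathbf{X}$ and $\mathbf{Y}$ are separate families there is no disjointness constraint to verify (unlike the $J,J^c$ split in Theorem \ref{t2.1}), so the argument is if anything cleaner; the corollary is essentially a restatement of the theorem's proof with the single split family replaced by an ordered pair of families.
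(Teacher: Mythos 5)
Your proposal is correct and coincides with the paper's intended argument: the paper offers no separate proof of Corollary \ref{c:NA_2_Seq} beyond the remark that ``our proof technique gives the following corollary,'' and your two-pass truncation--Strassen--tightness--Portmanteau argument (first truncating $\mathbf{Y}$ with $I'$ finite, then truncating $\mathbf{X}$ against the fixed infinite-dimensional function of $Y_J$) is precisely the technique of Theorem \ref{t2.1} transplanted to an ordered pair of families. The bookkeeping points you flag (the reference element $z$, and the fact that domination by non-decreasing functions of finitely many coordinates suffices because $\mu_n,\nu_n$ charge only sequences that are constant equal to $z$ beyond coordinate $n$) are handled exactly as in that proof, so there is no gap.
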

\remove{Theorem \ref{t2.1} can be reformulated, with a similar proof, for \pa,
and as such would be an extension of the theorem proved by
Linqvist \cite[Theorem 5.1]{Lindqvist88}.

\begin{theorem}\label{t.lind} Consider a discrete family
  ${\mathbf X}=(X_n)_{n\in I}$ of random elements of POP space
  $(\X, \cX)$. Assume that for each finite $J\subset I$ the finite
  subfamily $X_J$ is
  \pa . Then the family ${\bf X}$ is \pa.
\end{theorem}
}
A second very useful consequence of our proof technique is the following lemma allowing us to restrict \eqref{NA} to only bounded continuous non-decreasing functions. 
\begin{lemma}
\label{t:suff_condn_NA}
Consider a finite family $\mathbf X = (X_1,\ldots, X_m)$ such that it satisfies \eqref{NA} (\eqref{PA}) for all non-negative, bounded, continuous, non-decreasing functions $f,g$ on $\X^J,\X^K$ respectively where $J \subset \{1,\ldots,m\}$ and $K = \{1,\ldots,m\} - J$. Then $\mathbf X$ is $\na \,( \pa \,)$.
\end{lemma}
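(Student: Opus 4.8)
The plan is to read \eqref{NA} as a stochastic domination statement and then upgrade the test functions from continuous to measurable by invoking Strassen's theorem together with the fact that bounded continuous non-decreasing functions already determine $\preceq_{st}$. First I would use the covariance identity \eqref{e:covid} to reduce to non-negative, bounded, non-decreasing $f$ and $g$, so that I may work with such functions throughout. Writing $K=\{1,\ldots,m\}\setminus J$, I note that the finite products $\X^J$ and $\X^K$ are Polish spaces which, with the coordinatewise order, are again POP spaces: the associated order set $H$ is an intersection of finitely many closed conditions and hence closed. In particular Strassen's theorem (Theorem \ref{strassen}) applies on both $\X^J$ and $\X^K$.

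For the core step I would fix a non-negative, bounded, continuous, non-decreasing $f$ on $\X^J$ with $\BE[f(X_J)]>0$ (otherwise $f(X_J)=0$ a.s.\ and \eqref{NA} is trivial), and define probability measures on $\X^K$ by $\mu:=\BE[f(X_J)]^{-1}\,\BE[f(X_J)\I\{X_K\in\cdot\}]$ and $\nu:=\BP(X_K\in\cdot)$. The hypothesis then says exactly that $\int g\,d\mu\le\int g\,d\nu$ for every non-negative, bounded, continuous, non-decreasing $g$, and hence, after adding constants, for every bounded continuous non-decreasing $g$. The decisive point is that this continuous test class already forces $\mu\preceq_{st}\nu$: by the continuous-function refinement of Strassen's theorem contained in \cite{kamaeko} (see also \cite{kamaek}), the domination inequality tested merely against bounded continuous non-decreasing functions already yields a coupling of $\mu$ and $\nu$ concentrated on $H$, and by Theorem \ref{strassen} such a coupling is equivalent to $\mu\preceq_{st}\nu$. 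Once $\mu\preceq_{st}\nu$ holds, the definition of $\preceq_{st}$ gives $\int g\,d\mu\le\int g\,d\nu$ for all bounded measurable non-decreasing $g$, i.e. $\BE[f(X_J)g(X_K)]\le\BE[f(X_J)]\BE[g(X_K)]$ for the fixed continuous $f$ and every bounded measurable non-decreasing $g$.

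It then remains to drop the continuity restriction on $f$, which I would do by a symmetric argument. Fixing any non-negative, bounded, measurable, non-decreasing $g$ on $\X^K$ with $\BE[g(X_K)]>0$, I set $\mu':=\BE[g(X_K)]^{-1}\,\BE[g(X_K)\I\{X_J\in\cdot\}]$ and $\nu':=\BP(X_J\in\cdot)$ on $\X^J$. The previous step gives $\int f\,d\mu'\le\int f\,d\nu'$ for all non-negative, bounded, continuous, non-decreasing $f$; applying the same refinement once more yields $\mu'\preceq_{st}\nu'$, hence $\int f\,d\mu'\le\int f\,d\nu'$ for all bounded measurable non-decreasing $f$. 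This is precisely \eqref{NA} in full generality, so $\mathbf X$ is \na. The \pa\ case is entirely analogous: one fixes a continuous non-decreasing test function, tilts $\BP(X_J\in\cdot)$ by it to form the dominating measure, and reduces \eqref{PA} to a relation $\mu\preceq_{st}\nu$ on $\X^J$, to which the same refinement applies.

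The one genuinely nontrivial ingredient, and the step I expect to be the main obstacle, is the claim that bounded continuous non-decreasing functions determine $\preceq_{st}$. This is why I route the argument through Strassen's theorem rather than through an explicit approximation: one cannot simply approximate the indicator of an increasing open set $G$ by $\min(1,n\,d(\cdot,G^c))$, since the metric need not respect the order and this approximant need not be non-decreasing. A monotone regularization such as $x\mapsto\sup\{d(z,G^c):z\preceq x\}$ restores monotonicity but is in general only lower semicontinuous, so promoting continuous to measurable test functions by hand is delicate; the coupling characterization sidesteps the issue entirely.
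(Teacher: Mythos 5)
Your proof is correct and is essentially the paper's own argument: both reduce to non-negative functions via \eqref{e:covid}, tilt the law of one block by a fixed continuous test function so that the hypothesis becomes a domination statement, invoke the known fact that bounded continuous non-decreasing functions already characterize $\preceq_{st}$ on a POP space, and then bootstrap twice to upgrade both $f$ and $g$ to bounded measurable non-decreasing functions. The only differences are bookkeeping: the paper cites \cite[Theorem 2.6.4]{mullercomparison} for that characterization (where you invoke the Kamae--Krengel--O'Brien refinement of Strassen's theorem, an equally valid reference), and it fixes the continuous $g$ first rather than $f$, which is immaterial by symmetry.
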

\begin{proof}
We shall again prove in the case of \na \, alone. Let $J,K$ be as assumed in the lemma. From \eqref{e:covid}, it suffices to show \eqref{NA} for all non-negative bounded, measurable non-decreasing functions. Let $g$ be a non-negative, bounded, continuous, non-decreasing function such that $\BE[g(X_K)] > 0$. Thus, we have for all non-negative, bounded, continuous, non-decreasing functions $f$ that
\begin{align*}
\BE[f(X_J)g(X_{K})]\le \BE[f(X_J)]\BE[g(X_{K})],
\end{align*}
and this inequality can be re-written in the form of a stochastic order relation as
\begin{align*}
\BE[g(X_K]^{-1}\BE[f(X_J)g(X_K)]\le \BE[f(X_J)].
\end{align*}
Defining probability measures $\nu_J := \BP(X_J \in \cdot)$, $\mu^g_{J} := \BE[g(X_K)]^{-1}\BE[g(X_K)\I\{X_J \in\cdot\}]$, we have that the above inequality implies $\mu^g_J \preceq_{st} \nu_J$ by \cite[Theorem 2.6.4]{mullercomparison}. From the definition of $\preceq_{st}$ (i.e., stochastic domination), we have that 
\begin{align*}
\BE[f(X_J)g(X_{K})]\le \BE[f(X_J)]\BE[g(X_{K})],
\end{align*}
for all non-negative, bounded, measurable functions $f$. Now repeating the above argument by fixing a non-negative bounded measurable function $f$ such that $\BE[f(X_J)] > 0$, we can derive that \eqref{NA} holds for all non-negative, bounded, measurable non-decreasing functions $f,g$ as required to complete the proof. 
\end{proof}
A powerful consequence of the above lemma is that the property of association is preserved under weak convergence. We shall use this theorem in our next section on random measures but only in the case of $\X = \R$.  
\begin{theorem}\label{t:weak_conv}
  For $k \geq 1$, consider a discrete family ${\mathbf X}^k=(X^k_i)_{i \in I}$ of random elements of POP space $(\X, \cX)$. Assume
  that $\mathbf X^k$ is associated for every $k \geq 1$ in the same way (i.e., always $\pa \,$ or always $\na \,$) and
  $\mathbf X^k \stackrel{d}{\to} \mathbf X$ as $k\to \infty$. Then, $\mathbf X$ is associated in the same positive or negative way as the elements in the sequence.
\end{theorem}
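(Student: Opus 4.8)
The plan is to prove the theorem by reducing the infinite-index case to the finite-index case via Theorem \ref{t2.1}, and then handling the finite case through Lemma \ref{t:suff_condn_NA} together with the continuous mapping / Portmanteau theorem. I will present the \na\ case; the \pa\ case is entirely analogous. First I would invoke Theorem \ref{t2.1}: it suffices to show that every finite subfamily of $\mathbf X$ is \na, since then the full countable family $\mathbf X$ inherits the property. So fix a finite $J \subset I$, and note that weak convergence $\mathbf X^k \stackrel{d}{\to}\mathbf X$ on $\X^I$ implies, by projecting onto the finitely many coordinates in $J$, that $X^k_J \stackrel{d}{\to} X_J$ on $\X^J$ (the coordinate projections are continuous, so this follows from the continuous mapping theorem).

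Next I would exploit Lemma \ref{t:suff_condn_NA}, which tells us that to establish \na\ of the finite family $X_J$ it is enough to verify \eqref{NA} for all non-negative, bounded, \emph{continuous}, non-decreasing $f$ and $g$ on the appropriate coordinate subspaces. This is the crucial reduction: continuity is exactly the regularity we need to pass inequalities through weak limits. So let $J'$ and $K'$ partition $J$, and let $f\colon\X^{J'}\to[0,\infty)$ and $g\colon\X^{K'}\to[0,\infty)$ be bounded, continuous, and non-decreasing. For each $k$, since $\mathbf X^k$ is \na, we have
\begin{align*}
\BE[f(X^k_{J'})g(X^k_{K'})] \le \BE[f(X^k_{J'})]\,\BE[g(X^k_{K'})].
\end{align*}

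Now I would take $k\to\infty$ on both sides. Because $f$ is bounded and continuous and $X^k_{J'}\stackrel{d}{\to}X_{J'}$, the definition of weak convergence (testing against bounded continuous functions) gives $\BE[f(X^k_{J'})]\to\BE[f(X_{J'})]$, and likewise $\BE[g(X^k_{K'})]\to\BE[g(X_{K'})]$. For the left-hand side, the product $(x,y)\mapsto f(x)g(y)$ is a bounded continuous function on $\X^{J'}\times\X^{K'}=\X^J$, and $X^k_J\stackrel{d}{\to}X_J$, so $\BE[f(X^k_{J'})g(X^k_{K'})]\to\BE[f(X_{J'})g(X_{K'})]$. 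Passing to the limit in the inequality (weak inequalities are preserved under limits) yields
\begin{align*}
\BE[f(X_{J'})g(X_{K'})] \le \BE[f(X_{J'})]\,\BE[g(X_{K'})]
\end{align*}
for all bounded continuous non-decreasing non-negative $f,g$. By Lemma \ref{t:suff_condn_NA}, $X_J$ is \na, and since $J$ was an arbitrary finite subset, Theorem \ref{t2.1} completes the proof.

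I expect the only genuine subtlety to be the justification that the finite-dimensional weak convergence $X^k_J\stackrel{d}{\to}X_J$ follows from $\mathbf X^k\stackrel{d}{\to}\mathbf X$ on the product space $\X^I$, which is a standard consequence of the continuity of coordinate projections but should be stated explicitly. The rest is a routine limit argument whose success hinges entirely on having reduced to continuous test functions via Lemma \ref{t:suff_condn_NA}; had we been forced to work with merely measurable non-decreasing functions, the weak limit would not in general preserve the expectations, so the appeal to that lemma is the real engine of the proof.
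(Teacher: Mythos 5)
Your proposal is correct and follows essentially the same route as the paper's own proof: reduce to finite subfamilies via Theorem \ref{t2.1}, pass the \na\ (or \pa) inequality for bounded continuous non-decreasing test functions through the weak limit, and then upgrade to measurable functions via Lemma \ref{t:suff_condn_NA}. You merely spell out the limit step (convergence of $\BE[f(X^k_{J'})g(X^k_{K'})]$ via the bounded continuous product function) that the paper leaves implicit.
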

\begin{proof}
From our assumptions, we have that for each $k\ge 1$, $m\ge 1$,  $(X^k_1,\ldots, X^k_m) $ is associated, and  $(X^k_1,\ldots, X^k_m)\stackrel{d}{\to} (X_1,\ldots, X_m)$. Thus, we have that $(X_1,\ldots, X_m)$ satisfies \eqref{NA} (or \eqref{PA}) for for all non-negative, bounded, continuous, non-decreasing functions $f,g$ defined on disjoint index sets of $\{1,\ldots,m\}$. Now, from Lemma \ref{t:suff_condn_NA} we have that $(X_1,\ldots,X_m)$ is a finite \na \, (or \pa \,) family and because of our Theorem \ref{t2.1}, this suffices to conclude that $\mathbf X$ is a \na \,(or \pa \,) family. 
\end{proof}
We now compare our above results and proof techniques to those in the literature. Under the assumption that the product POP space is normally ordered, Lemma \ref{t:suff_condn_NA} and Theorem \ref{t:weak_conv} are shown  for \pa \, in \cite[Theorem 3.1(v)]{Lindqvist88}. Lemma \ref{t:suff_condn_NA} for \pa \ is shown for $\X = \R$ in \cite[Lemma 3.1 and Theorem 3.3]{epw}. The proof techniques of \cite{epw} and \cite{Lindqvist88} involve approximating binary, non-decreasing, measurable functions by non-negative, bounded, continuous, non-decreasing functions and these require additional assumptions on the space $\X$ relating the metric and order. These ideas can also be implemented in the case of \na \, with suitable modifications. However, our proof avoids these by using Strassen's theorem and similar criteria holding for stochastic domination.

An alternative assumption to normally ordered spaces is the following condition formulated in \cite{ludger} (recalled as (R1) in \cite{Lindqvist88}) : $x \mapsto d(x,A)$ is non-increasing for an increasing set $A$. Under this assumption, the proof ideas as in \cite[Lemma 3.1 and Theorem 3.3]{epw} or \cite[Theorem 3.1(v)]{Lindqvist88} or \cite[Theorem 1(d)]{ludger} can be adapted suitably for both \pa \, and \na \,. Also, we would like to mention that this condition and the property of being normally ordered need not be related (see \cite[pg. 38]{noebels}). 
%
%
%
\section{Association of random measures}
\label{randommeasures}

Let $\sS$ be a Polish space, $\cS$ be the $\sigma$-field of Borel
subsets of $\sS$, and $\cS_b$ be the ring of bounded Bore1 sets in
$\sS$. By a random measure $M$ on $\sS$ we mean a mapping of some
probability space $(\Omega, {\cal F}, P)$ into the space $\mathbf{M}(\sS)$ of
Radon measures on $(\sS,\cS)$, equipped with the smallest $\sigma$-field making
the mappings $\mu\mapsto\mu(B)$ measurable for all $B\in \cS$.
When $M$ is a.s.\ confined to the space
$\mathbf{N}(\sS)\subset\mathbf{M}(\sS)$
of integer valued measures, we say that $M$ is a
point process.  Vague convergence
$\mu_n\to \mu$ in $\mathbf{M}(\sS)$ means that
$$
\int_\sS f d\mu_n\to \int_\sS f d\mu
$$
for each continuous $f\colon\sS\to \R_+$ with
bounded support.  A natural partial ordering on $\mathbf{M}(\sS)$ and
$\mathbf{N}(\sS)$ is given by: $\mu < \nu$ if $\mu (B)\le \nu (B),$ for all
$B\in \cS_b$.  It is known \cite[Lemma 1]{rolski-szekli} that the
vague topology and the partial order $< $ are related, namely $<$ is
closed, i.e.\ the set $\{(\mu, \nu): \mu <\nu\}\subset \mathbf{M}(\sS)^2$ is closed
in the product topology on $\sS^2$.
We denote the strong stochastic ordering of random elements
of $\mathbf{M}(\sS)$ by $<_{st}$.  A random
measure $M$ is then said to be positively associated (\pa )\
\begin{align}\label{MPA}
\BE[f(M)g(M)]\ge \BE[f(M)]\BE[g(M)]
\end{align}
for any pair of real valued, bounded measurable functions $f, g$ on $\mathbf{M}(\sS)$, non-decreasing w.r.t.\ the order $<$.

Let ${\cal I} \subset \cS_b$ be a countable, topological,
dissecting, semi-ring generating
the $\sigma$-field $\cS$, as defined in \cite[Lemma 1.9]{kallenberg17}.
Denote by $I_1, I_2,\ldots $ some enumeration of the elements of $\mathcal{I}$.
Define the mapping $\gamma\colon\mathbf{M}(\sS)\to \R_+^\infty$ by
\begin{equation}\label{one-one}
\gamma (\mu):=(\mu (I_1), \mu (I_2),\ldots )
\end{equation}
and let $\mathbb{G}:=\gamma (\sS)$.  Since $\cal I$ is a semi-ring
generating $\cS$, by \cite[Theorem 11.3]{billingsley} the mapping
$\gamma$ is 1-1 and it is also increasing. Let $\rho$ be a complete metric in $\sS$ generating
the vague topology. Define a metric $\rho_\gamma$ in $\mathbb{G}$ by
$$
\rho_\gamma (x,y) =\rho (\gamma ^{-1}(x),\gamma^{-1}(y)),
$$
for all $x,y\in \mathbb{G}$. We recall some basic properties of $\mathbb{G}$;
see \cite[Lemma 2]{rolski-szekli} and \cite[Theorem A1.3]{Kallenberg02}.

\begin{lemma}\label{lemmar-s}
\begin{itemize}
\item[(i)] We have that
$\mathbb{G}\in {\cal B}(\R_+^\infty)$ and that the inverse map
$\gamma^{-1}\colon \mathbb{G}\to\mathbf{M}(\sS)$ is measurable.
\item[(ii)]
$\mathbb{G}$ is metrizable as a Polish space by the metric $\rho_\gamma$.
\item[(iii)] The Borel $\sigma$-field ${\cal B}(\mathbb{G})$
  generated by $\rho_{\gamma}$ is of the form
  ${\cal B}(\mathbb{G})=\mathbb{G}\cap {\cal B}(\R_+^\infty).$
\end{itemize}
\end{lemma}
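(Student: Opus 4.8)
The plan is to establish the three stated properties of $\mathbb{G}$ by transporting known structural facts about the space $\mathbf{M}(\sS)$ of Radon measures through the bijection $\gamma$. The key point is that $\gamma$ is a measurable bijection onto $\mathbb{G}$ with measurable inverse (a Borel isomorphism), so that the Polish/Borel structure of $\mathbf{M}(\sS)$ can be pulled back to $\mathbb{G}$.

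For part (i), I would first recall that the vague topology makes $\mathbf{M}(\sS)$ a Polish space (this is standard; see \cite[Theorem A2.3]{Kallenberg02}), and that the maps $\mu\mapsto\mu(I_k)$ are measurable, so $\gamma$ is a measurable map into $\R_+^\infty$. The real content is to show that the \emph{image} $\mathbb{G}=\gamma(\mathbf{M}(\sS))$ is a Borel subset of $\R_+^\infty$ and that $\gamma^{-1}$ is measurable. Since $\gamma$ is an injective measurable map from a Polish (hence standard Borel) space into a Polish space, I would invoke the Lusin–Souslin theorem: the injective Borel image of a Borel set in a Polish space is Borel, and the inverse of a Borel bijection between standard Borel spaces is Borel. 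This is precisely the content referenced in \cite[Theorem A1.3]{Kallenberg02}, and applying it immediately yields $\mathbb{G}\in\cB(\R_+^\infty)$ together with the measurability of $\gamma^{-1}$.

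For part (ii), I would argue that $\mathbb{G}$ equipped with $\rho_\gamma$ is isometric, by construction, to $(\mathbf{M}(\sS),\rho)$: indeed $\rho_\gamma(\gamma(\mu),\gamma(\nu))=\rho(\mu,\nu)$, so $\gamma$ is a bijective isometry. Since $\rho$ is a complete metric generating the vague topology and $\mathbf{M}(\sS)$ is separable, the isometric copy $(\mathbb{G},\rho_\gamma)$ is a complete separable metric space, i.e.\ $\mathbb{G}$ is Polish under $\rho_\gamma$. Here I would take care to note that separability and completeness transfer verbatim under an isometry. For part (iii), the identity $\cB(\mathbb{G})=\mathbb{G}\cap\cB(\R_+^\infty)$ amounts to showing that the Borel $\sigma$-field generated by the intrinsic metric $\rho_\gamma$ coincides with the trace (subspace) $\sigma$-field inherited from $\R_+^\infty$. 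Since both $\gamma$ and $\gamma^{-1}$ are Borel measurable (from part (i)), $\gamma$ is a Borel isomorphism between $(\mathbf{M}(\sS),\cB(\mathbf{M}(\sS)))$ and $(\mathbb{G},\mathbb{G}\cap\cB(\R_+^\infty))$; on the other hand the isometry in part (ii) identifies $\cB(\mathbf{M}(\sS))$ with $\cB(\mathbb{G})$ (the $\rho_\gamma$-Borel sets). Comparing the two $\sigma$-fields on $\mathbb{G}$ gives the claimed equality.

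The main obstacle I anticipate is part (i), specifically the verification that $\mathbb{G}$ is Borel and $\gamma^{-1}$ is measurable; everything else is a fairly mechanical transfer through the isometry once this descriptive-set-theoretic fact is in hand. The crux is that injectivity of $\gamma$ (guaranteed here because $\cI$ is a dissecting semi-ring generating $\cS$, via \cite[Theorem 11.3]{billingsley}) is exactly what licenses the Lusin–Souslin argument — without injectivity the image of a Borel set need not be Borel. I would therefore be careful to isolate and cite the injectivity of $\gamma$ as the hypothesis that makes the Borel-image theorem applicable, and otherwise lean on the cited results \cite[Lemma 2]{rolski-szekli} and \cite[Theorem A1.3]{Kallenberg02}, which package these standard facts.
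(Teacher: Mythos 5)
The paper offers no proof of this lemma at all --- it is ``recalled'' via the citations \cite[Lemma 2]{rolski-szekli} and \cite[Theorem A1.3]{Kallenberg02} --- and your argument is precisely the standard unpacking of those references: injectivity of $\gamma$ (via \cite[Theorem 11.3]{billingsley}) combined with the Lusin--Souslin/Kuratowski theorem for (i), transfer of completeness and separability through the built-in isometry for (ii), and comparison of the two resulting Borel structures on $\mathbb{G}$ for (iii); so your proposal is correct and follows essentially the same route. The only caveat is that you cite \cite[Theorem A2.3]{Kallenberg02} for Polishness of $\mathbf{M}(\sS)$ in the vague topology, a result stated there for locally compact second countable spaces, whereas in this paper's general Polish setting that fact should instead be taken from \cite{kallenberg17}; since it does hold in that generality, this is a citation mismatch rather than a mathematical gap.
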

%
For a Borel set $A\subset \sS$, let ${\cal F}(A)$ denote the
$\sigma$-field on $\mathbf{N}(\sS)$ generated by the functions
$\mu \mapsto \mu (B)$ for Borel $B\subseteq A$. We say that a function
on $\mathbf{N}(\sS)$ is measurable with respect to $A$ if it is
measurable with respect to ${\cal F}(A)$. For each measure $\mu$ on
$\sS$, we denote by $\mu_A:=\mu(\cdot\cap A)$ the restriction of $\mu$
to $A$. Then a measurable function $f\colon \mathbf{M}(\sS)\to\R$ is
$A$-measurable iff $f(\mu)=f(\mu_A)$ for each $\mu\in\mathbf{M}(\sS)$.

The following definition is an extension to random measures of definitions used by  Lyons \cite{lyons2014determinantal}  and Poinas et al. \cite{poinas2017mixing} for point processes.
\begin{definition}\label{na-rm}\rm
We say that a random measure $M$ is  negatively associated (NA) if
\begin{align}\label{MNA}
\BE[f(M)g(M)]\le \BE[f(M)]\BE[g(M)],
\end{align}
for every pair $f, g$ of bounded non-decreasing functions that are
measurable with respect to disjoint measurable subsets of $\sS$.
\end{definition}
\begin{remark}\rm\label{rem2}
The above definition of NA property for random measures is not equivalent to the one given in Remark \ref{rem1} when random measures are viewed as random fields indexed by the uncountable set $\{B : B \in \cS\}$. But for PA property, these two definitions - \eqref{MPA} and that in Remark \ref{rem1} - are equivalent.
\end{remark}
We shall again refer to a random measure as associated if it is either negatively associated
or positively associated.  As a consequence of Lemma \ref{lemmar-s}, Kwiecinski and Szekli
\cite[Theorem 3.2]{kwiecinski-szekli96} proved for locally compact
spaces that the random measure $M$ is positively associated iff random
vectors $(M(B_1),\ldots M(B_n))$ are positively associated for all
$n \ge 1,$ and bounded sets $B_1,\ldots,B_n\in \cS_b$.  We next show
an analogous result for the NA-property. We shall relax the assumption
on local compactness. To get the positive association result it was
enough to use the fact that non-decreasing transformations of
positively associated random elements into another partially ordered
space are again positively associated elements of this space. For
negative association this property does not hold. In Poinas et al.
\cite[Theorem 2.3]{poinas2017mixing} the proof of an analog of Theorem
\ref{measurena} is given for point processes on $\sS=\R^d$. They use a
variant of the monotone class theorem. This proof is not directly
applicable for more general spaces $\sS$. A proof of the NA part of Theorem \ref{measurena} for
point processes on locally compact partially ordered Polish spaces
can be (implicitly) found in Lyons \cite[paragraph
3.7]{lyons2014determinantal}, where negative association of some
determinantal point processes on locally compact Polish spaces is
proved. The arguments there are rather lengthy and are based on
Lusin's separation theorem and the Choquet capacitability theorem. We
shall give a short proof of this result in a more general setting of
random measures, using Lemma \ref{lemmar-s} and Theorem \ref{t2.1}.
\begin{theorem}\label{measurena}
  Let M be a random measure on a Polish space $\sS$. Then M is
  associated if and only if random vectors
  $(M(B_1),\ldots M(B_n))$ are associated in the same positive or negative way for all
  $n \ge 1,$ and disjoint sets $B_1,\ldots,B_n\in {\cal I}$.
\end{theorem}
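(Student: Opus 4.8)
The plan is to prove both directions of the equivalence, treating the NA case in detail (the PA case being either already known or following analogously). The forward direction is immediate: if $M$ is associated in the sense of Definition \ref{na-rm}, then for disjoint $B_1,\ldots,B_n \in \mathcal{I}$, given non-decreasing bounded $f,g$ acting on the coordinates indexed by a partition of $\{1,\ldots,n\}$, one checks that $f$ composed with the evaluation $\mu \mapsto (\mu(B_i))$ is non-decreasing and measurable with respect to the union of the relevant $B_i$'s, which are disjoint measurable subsets of $\sS$; hence \eqref{MNA} applied to these compositions yields association of the random vector $(M(B_1),\ldots,M(B_n))$. So the substance is the reverse direction.

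For the reverse direction, I would use the homeomorphism-like structure provided by Lemma \ref{lemmar-s}. The idea is to transport the problem from $\mathbf{M}(\sS)$ to the coordinate space $\mathbb{G} \subset \R_+^\infty$ via the map $\gamma$ of \eqref{one-one}, so that the random measure $M$ becomes the random sequence $\gamma(M) = (M(I_1), M(I_2), \ldots)$. First I would observe that the hypothesis, namely association of $(M(B_1),\ldots,M(B_n))$ for all disjoint $B_1,\ldots,B_n \in \mathcal{I}$, gives us that every finite subfamily of the countable field $(M(I_k))_{k \ge 1}$ drawn from \emph{disjoint} semi-ring elements is NA. The main engine is then Theorem \ref{t2.1} (more precisely Corollary \ref{c:NA_2_Seq}), which upgrades this finite-dimensional NA to NA of the full countable family $(M(I_k))_{k\ge 1}$ with respect to arbitrary countable disjoint index sets. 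This is precisely where the POP-space machinery and Strassen's theorem do the work, and it is why the local-compactness assumption can be dropped.

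The remaining and most delicate step is to pass from NA of the coordinate sequence $(M(I_k))_k$ back to the functional NA inequality \eqref{MNA} for arbitrary bounded non-decreasing $f,g$ measurable with respect to disjoint Borel sets $A, A^c$ (or more generally disjoint $A_1, A_2$). The key obstruction is that $f$ and $g$ in \eqref{MNA} are functions on $\mathbf{M}(\sS)$ depending on the restriction of the measure to a general Borel set, whereas Theorem \ref{t2.1} only controls functions of the countably many coordinates $M(I_k)$. The plan is to argue that because $\mathcal{I}$ is a dissecting semi-ring generating $\cS$, an $A$-measurable function factors (up to the identification of Lemma \ref{lemmar-s}(i)) through the coordinates $\{M(I_k) : I_k \subseteq A\}$, or can be approximated by functions thereof; the dissecting property ensures that the $I_k$ contained in $A$ suffice to recover $\mu_A$ measurably, and one partitions the index set $\{k\}$ according to whether $I_k$ meets $A$ or its complement. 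Since $f$ is non-decreasing in $\mu$ and $\gamma$ is an increasing bijection onto $\mathbb{G}$, the transported function $f \circ \gamma^{-1}$ is non-decreasing on $\mathbb{G}$ in the coordinatewise order, so it extends to a non-decreasing function on $\R_+^\infty$ and the NA inequality for the sequence applies. I expect the careful verification that $A$-measurable functions depend only on the coordinates indexed by $\{I_k \subseteq A\}$ — so that the two functions $f,g$ really are functions of disjoint blocks of coordinates — to be the genuine technical heart of the argument, relying essentially on the dissecting and generating properties of $\mathcal{I}$ together with the measurability statement of Lemma \ref{lemmar-s}.
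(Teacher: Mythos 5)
Your skeleton (transport via $\gamma$, then invoke Theorem \ref{t2.1}/Corollary \ref{c:NA_2_Seq}) matches the paper's, but the step you yourself flag as the technical heart is exactly where the proposal breaks, and the mechanism you propose for it is wrong rather than merely incomplete. The claim that ``the dissecting property ensures that the $I_k$ contained in $A$ suffice to recover $\mu_A$ measurably'' is false: for a general Borel set $A$ there may be \emph{no} element of $\mathcal{I}$ contained in $A$ at all (take $A\subset\R$ Borel with both $A$ and $A^c$ dense and of positive measure in every interval; no dyadic cell lies inside $A$), and even when such $I_k$ exist, the values $\mu(I_k)$, $I_k\subseteq A$, do not determine $\mu_A$ --- for instance $\mu(A\cap I_j)$ with $I_j\not\subseteq A$ is not a function of them, while the $\sigma$-field of $A$-measurable functions is generated by $\mu\mapsto\mu(C)$ for \emph{all} Borel $C\subseteq A$. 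Your fallback, partitioning the indices ``according to whether $I_k$ meets $A$ or its complement,'' does not produce disjoint blocks either: generically an $I_k$ meets both $A$ and $A^c$, so the coordinate $M(I_k)$ carries information about both restrictions; then $f(M_A)$ and $g(M_{A^c})$ are non-decreasing functions of the \emph{same} coordinates, and negative association applied to disjoint index blocks yields nothing. (Note also that the full family $(M(I_k))_{k\ge 1}$ is never NA, since $\mathcal{I}$ contains nested sets whose masses are positively correlated; only subfamilies indexed by pairwise disjoint members of $\mathcal{I}$ are controlled by the hypothesis, which is precisely why the selection problem you run into is unavoidable along this route.)

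The paper resolves this with two ingredients that have no counterpart in your proposal. First, instead of selecting coordinates $M(I_k)$ with $I_k\subseteq A$, it \emph{intersects}: it works with $X^A:=(M(I_n\cap A))_{n\ge1}=\gamma(M_A)$ and $X^B:=(M(I_n\cap B))_{n\ge1}=\gamma(M_B)$, which do determine the restrictions (so $f(M)=\tilde f(X^A)$ and $g(M)=\tilde g(X^B)$ with $\tilde f=f\circ\gamma^{-1}$, $\tilde g=g\circ\gamma^{-1}$ non-decreasing) and are indexed by pairwise disjoint bounded Borel subsets of $\sS$. The price is that $I_n\cap A$ is no longer in $\mathcal{I}$, so the hypothesis does not apply directly; this is repaired by the second ingredient, Lemma \ref{lem:meas-ring-NA}, which upgrades the hypothesis from disjoint sets in $\mathcal{I}$ to arbitrary disjoint sets in $\cS_b$ via a local monotone class argument that itself relies on Theorem \ref{t:weak_conv}. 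With these in hand, one decomposes the (overlapping) sets $I_1,\dots,I_m$ into disjoint semi-ring elements $I'_1,\dots,I'_l$, applies the extended hypothesis to the $2l$ pairwise disjoint sets $I'_i\cap A$, $I'_i\cap B$ to get the finite-dimensional inequalities \eqref{e4.7}, and only then invokes Corollary \ref{c:NA_2_Seq} to pass to the sequences $X^A,X^B$ and hence to \eqref{MNA}. Without the intersection step and Lemma \ref{lem:meas-ring-NA}, the transport argument cannot be completed, so the proposal has a genuine gap.
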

Before proving the theorem, we need a lemma that will allow us to
assume that the bounded disjoint sets can be taken to be measurable in
the above theorem instead of just elements of $\mathcal{I}$.
\begin{lemma}\label{lem:meas-ring-NA}
Let $M$ be a random measure on a Polish space $\sS$. Then
$(M(B_1),\ldots, M(B_n))$ is 
associated for all $n \ge 1,$
and disjoint sets $B_1,\ldots,B_n\in {\cal I}$ iff
$(M(B_1),\ldots, M(B_n))$ is associated in the same positive or negative way
for all $n \ge 1$
and disjoint sets $B_1,\ldots,B_n \in \cS_b$.
\end{lemma}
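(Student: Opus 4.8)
The plan is to prove Lemma \ref{lem:meas-ring-NA}, whose nontrivial direction is the forward one: assuming association for all disjoint tuples drawn from the semi-ring $\mathcal{I}$, deduce association for all disjoint tuples of bounded Borel sets $B_1,\ldots,B_n \in \cS_b$. The reverse implication is immediate since $\mathcal{I} \subset \cS_b$. The key structural fact to exploit is that $\mathcal{I}$ is a dissecting semi-ring generating $\cS$: every bounded Borel set can be approximated in a measure-theoretic sense by finite disjoint unions of elements of $\mathcal{I}$, and crucially, finite disjoint unions of sets from $\mathcal{I}$ (or differences of such) remain controllable. The route I would take is to first enlarge the class from $\mathcal{I}$ to finite disjoint unions of $\mathcal{I}$-sets, then pass to the full ring $\cS_b$ via an approximation argument, using continuity of the maps $\mu \mapsto \mu(B)$ under set approximation together with Theorem \ref{t:weak_conv} (preservation of association under weak convergence, applied with $\X = \R$).

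First I would observe that if $B = \bigsqcup_{j=1}^{k} I_{i_j}$ is a finite disjoint union of sets from $\mathcal{I}$, then $M(B) = \sum_j M(I_{i_j})$ is a non-decreasing, continuous function of the vector $(M(I_{i_1}),\ldots,M(I_{i_k}))$. Because non-decreasing functions acting on disjoint coordinate blocks preserve negative association (this is the closure property emphasized in the introduction, and the analogous statement holds trivially for positive association), association for tuples of $\mathcal{I}$-sets upgrades to association for tuples of finite disjoint unions of $\mathcal{I}$-sets, provided the unions used for distinct coordinates involve disjoint collections of $\mathcal{I}$-sets — which is guaranteed when the target sets $B_1,\ldots,B_n$ are themselves disjoint and each is built from its own disjoint family. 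Since $\mathcal{I}$ is a semi-ring, any finite set-algebra operation on its members lands in the ring it generates, i.e.\ finite disjoint unions, so this step is purely combinatorial bookkeeping.

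Next I would handle a general disjoint tuple $B_1,\ldots,B_n \in \cS_b$. Using that $\mathcal{I}$ generates $\cS$ and is dissecting, for each $B_\ell$ I would produce a sequence $A_\ell^{(m)}$ of finite disjoint unions of $\mathcal{I}$-sets with $A_\ell^{(m)} \to B_\ell$ in the sense that $M(A_\ell^{(m)}) \to M(B_\ell)$ almost surely (or at least in distribution) as $m \to \infty$; one can arrange the approximating sets for different $\ell$ to remain mutually disjoint because the $B_\ell$ are disjoint and $\mathcal{I}$ is dissecting. Then the random vectors $(M(A_1^{(m)}),\ldots,M(A_n^{(m)}))$ are associated for every $m$ by the previous step, and they converge in distribution to $(M(B_1),\ldots,M(B_n))$. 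Invoking Theorem \ref{t:weak_conv} with $\X = \R$ yields that the limiting vector is associated, which is exactly \eqref{NA} (or \eqref{PA}) for the sets $B_1,\ldots,B_n$.

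The main obstacle I anticipate is the approximation step: constructing the $A_\ell^{(m)}$ so that simultaneously (i) each is a finite disjoint union of $\mathcal{I}$-sets, (ii) the families for distinct $\ell$ stay disjoint, and (iii) $M(A_\ell^{(m)}) \to M(B_\ell)$ in distribution. Properties (i) and (ii) are where the dissecting and semi-ring structure of $\mathcal{I}$ does the real work — the dissecting property lets me refine a partition finely enough to resolve any bounded Borel set, and disjointness of the $B_\ell$ lets me keep the approximants separated. For (iii), the natural device is to approximate $B_\ell$ from inside and outside by $\mathcal{I}$-unions whose symmetric difference has $M$-measure tending to zero; since $M$ is a.s.\ a Radon measure, outer-regularity-type arguments on the generating semi-ring give $M(A_\ell^{(m)} \triangle B_\ell) \to 0$ almost surely, hence $M(A_\ell^{(m)}) \to M(B_\ell)$. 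Care is needed to ensure the approximation is measure-theoretically valid pathwise for the random measure rather than merely for a fixed deterministic measure, but this follows from the monotone-class / generating-semi-ring properties recorded in Lemma \ref{lemmar-s} and the cited \cite[Lemma 1.9]{kallenberg17}.
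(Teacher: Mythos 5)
Your strategy is genuinely different from the paper's: you upgrade association from $\mathcal{I}$-tuples to tuples of finite disjoint unions of $\mathcal{I}$-sets (via closure of \na\,/\pa\ under non-decreasing functions of disjoint blocks), and then pass to general disjoint sets in $\cS_b$ by approximating each $B_\ell$ in measure and invoking Theorem \ref{t:weak_conv}. The paper instead fixes disjoint $B_2,\ldots,B_m\in\mathcal{I}$, considers the class $\mathcal{M}$ of bounded Borel sets $B$ for which $\bigl(M(B\setminus(B_2\cup\cdots\cup B_m)),M(B_2),\ldots,M(B_m)\bigr)$ is associated, shows $\mathcal{M}$ contains the ring $\mathcal{R}(\mathcal{I})$ and is closed under bounded monotone set limits (again via Theorem \ref{t:weak_conv}), and concludes by the local monotone class theorem, iterating over coordinates. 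However, your version has a genuine gap at precisely the step you flag as delicate: you need \emph{deterministic} sets $A_\ell^{(m)}\in\mathcal{R}(\mathcal{I})$ with $M(A_\ell^{(m)}\,\triangle\,B_\ell)\to 0$ almost surely or in probability, since only for fixed, non-random sets does your first step certify that $(M(A_1^{(m)}),\ldots,M(A_n^{(m)}))$ is associated. Pathwise Radon regularity of $M$ produces approximating sets that depend on $\omega$, and neither Lemma \ref{lemmar-s} (which concerns the embedding $\gamma$ and measurability of $\gamma^{-1}$) nor \cite[Lemma 1.9]{kallenberg17} (existence of a dissecting semi-ring) converts these into a single deterministic sequence. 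The obvious deterministic control measure, the intensity $\BE[M(\cdot)]$, need not be locally finite (e.g.\ $M=X\lambda$ with $\BE[X]=\infty$), so one cannot simply approximate with respect to it either.

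The gap is reparable while keeping your route: take $W\in\mathcal{R}(\mathcal{I})$ bounded with $B_1\cup\cdots\cup B_n\subset W$, define the finite deterministic measure $\nu(\cdot):=\BE\bigl[M(\cdot\cap W)/(1+M(W))\bigr]$, and use the classical approximation theorem for finite measures over a generating ring to get $A_\ell^{(m)}\in\mathcal{R}(\mathcal{I})$, $A_\ell^{(m)}\subset W$, with $\nu(A_\ell^{(m)}\,\triangle\,B_\ell)\to 0$; this yields $M(A_\ell^{(m)})\to M(B_\ell)$ in probability, hence joint convergence in distribution. Disjointness of the approximants is then restored by passing to $\tilde A_\ell^{(m)}:=A_\ell^{(m)}\setminus\bigcup_{j\neq\ell}A_j^{(m)}$, which stays in the ring, and whose error is still controlled because disjointness of the $B_\ell$ forces $A_\ell^{(m)}\cap A_j^{(m)}\subset(A_\ell^{(m)}\setminus B_\ell)\cup(A_j^{(m)}\setminus B_j)$. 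With this insertion your argument is complete. It is worth seeing what the paper's route buys by comparison: testing $\mathcal{M}$ only on monotone set limits $B_k\uparrow B$ or $B_k\downarrow B$ makes the a.s.\ convergence $M(B_k)\to M(B)$ automatic from continuity of measures, so no control measure is ever needed, and placing $B\setminus(B_2\cup\cdots\cup B_m)$ in the first coordinate eliminates the disjointness bookkeeping that you must do by hand.
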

{\em Proof:} We shall again prove only for NA property and the same proof applies more easily to PA.
The `if' part is trivial as $\mathcal{I} \subset \cS_b$ and we shall now prove the other part.
Fix $m$ and disjoint $B_2,\ldots,B_m \in \mathcal{I}$ and
consider the class $\mathcal{M}$ of all bounded measurable sets $B$
such that $(M(B\setminus (B_2\cup\ldots \cup B_m)),M(B_2),\ldots, M(B_m))$
is NA. If $B \in \mathcal{I}$, then $B\setminus (B_2\cup\ldots \cup B_m)$
can be written as a finite disjoint union of $\mathcal{I}$-sets and
hence $(M(B\setminus (B_2\cup\ldots \cup B_m)),M(B_2),\ldots, M(B_m))$ is
NA. So, $\mathcal{I} \subset \mathcal{M}$. Denoting by
$\mathcal{R}(\mathcal{I})$, the ring generated by taking finite unions
of sets in $\mathcal{I}$, we have that
$\mathcal{R}(\mathcal{I}) \subset \mathcal{M}$. Further, by Theorem 
\ref{t:weak_conv}, we have that $\mathcal{M}$ is closed under bounded
monotone limits and so $\mathcal{M}$ is a local monotone class. By the
(local) monotone class theorem (\cite[Lemma 1.2]{kallenberg17}),
$\mathcal{M}$ contains the local monotone ring generated by the ring
$\mathcal{R}(\mathcal{I})$ which is nothing but $\cS_b$. Hence
$(M(C),M(B_2),\ldots,M(B_m))$ is NA for all $C \in \cS_b$ such that $C$
is disjoint from $B_2,\ldots,B_m$. Repeating this argument, we can
derive the asserted property for all disjoint
$B_1,\ldots,B_m \in \cS_b$. \qed

\bigskip

{\em Proof of Theorem \ref{measurena}:} Again, we shall prove only for NA and the proof for the PA case follows similarly.
The `only if' part is trivial and so we shall prove the
'if' part. Fix a pair $f, g$ of bounded non-decreasing functions that
are measurable with respect to disjoint measurable subsets of $\sS$,
say $A$, $B$.
Using \eqref{one-one}, define on $\mathbb{G}$ two measurable functions
$\tilde f:=f\circ \gamma ^{-1}$, and
$\tilde g:=g\circ \gamma ^{-1}$.
It is not hard to prove (by a monotone class
argument for instance) that $\gamma^{-1}$ is non-decreasing, so that
$\tilde f$ and $\tilde g$ are non-decreasing.
Define $X^A:=(M(I_n\cap A))_{n\ge 1}=\gamma(M_A)$ and $X^B:=(M(I_n\cap B))_{n\ge 1}$.
Suppose we can show that
\begin{align}\label{e4.5}
\BE[\tilde f(X^A)\tilde g(X^B)]\le \BE[\tilde f(X^A)]\BE[\tilde g(X^B)].
\end{align}
Then we would obtain that
\begin{align*}
\BE[f(M)g(M)]&=\BE[f(M_A)g(M_B)]=\BE[\tilde f(X^A)\tilde g(X^B)]\\
&\le \BE[\tilde f(X^A)\BE[\tilde g(X^B)]=\BE[f(M)]\BE[g(M)],
\end{align*}
as desired.

It remains to prove \eqref{e4.5} for arbitrary bounded non-decreasing
measurable functions $\tilde f$ and $\tilde g$.
To this end, we take $m\in\N$ and show that
\begin{align}\label{e4.7}
\BE[h_1(M(I_k\cap A)^m_{k=1})h_2(M(I_k\cap B)^m_{k=1})]
\le \BE[h_1(M(I_k\cap A)^m_{k=1})]\BE[h_2(M(I_k\cap B)^m_{k=1})],
\end{align}
for all bounded non-decreasing
measurable functions $h_1\colon\R_+^m\to\R$ and
$h_2\colon\R_+^m\to\R$. There exist $l\in\N$ and
disjoint sets $I'_i \in \mathcal{I}$, $i\in\{1,\ldots,l\}$, such
that $I_j = \cup_{i \in J_j}I'_i$ for $j\in\{1,\ldots,m\}$,
where $J_j \subset \{1,\ldots,l\}$ for all $j$.
Defining
\begin{align*}
h'_1((x_i)^l_{i=1})
:= h_1\bigg(\sum_{i \in J_1}x_i,\ldots,\sum_{i \in J_m} x_i\bigg),
\end{align*}
we observe that $h'_1$ is coordinatewise non-decreasing as $h_1$ is
coordinatewise non-decreasing. Similarly, we can define $h'_2$.
By disjointness of $I'_1,\ldots,I'_l$ and $A\cap B=\emptyset$,
we have by assumption and Lemma \ref{lem:meas-ring-NA} that the random vector
$$
(M(I'_1\cap A),\ldots,M(I'_l\cap A),M(I'_1\cap B),\ldots,M(I'_l\cap B))
$$
is negatively associated. Therefore
\begin{align*}
\BE[h'_1((M(I'_i\cap A)^l_{i=1})h'_2((M(I'_i\cap B)^l_{i=1})]
\le \BE[h'_1((M(I'_i\cap A)^l_{i=1})]\BE[h'_2((M(I'_i\cap B)^l_{i=1})].
\end{align*}
By definition of $h'_1,h'_2$, the above inequality is equivalent to
the inequality \eqref{e4.7}. Now using Corollary \ref{c:NA_2_Seq}, we
obtain \eqref{e4.5} as required to complete the proof. \qed

\bigskip

We use $\overset{d}{\to}$ to denote weak convergence of random measures as well.
\begin{theorem}\label{t:NA_limit}
  Suppose $M_n, n \geq 1$ are associated random measures on a Polish space $\sS$
  and $M_n \stackrel{d}{\to} M$. Then $M$ is also associated as a random measure in the same positive or negative way as the elements of the sequence.
\end{theorem}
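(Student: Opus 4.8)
The plan is to reduce the statement, via Theorem \ref{measurena}, to a statement about finite-dimensional distributions, and then to invoke the weak-convergence stability of association for real random vectors (Theorem \ref{t:weak_conv}, in the case $\X=\R$, as announced). The one genuinely new point is that vague convergence of random measures does \emph{not} transfer to the evaluations $\mu\mapsto\mu(B)$ unless $B$ is a continuity set for the limit $M$, so controlling these boundaries is the crux.

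First I would note that Theorem \ref{measurena} (together with Lemma \ref{lem:meas-ring-NA}) remains valid when the fixed semiring $\mathcal{I}$ is replaced by any countable topological dissecting semiring, since its proof uses only the defining properties of such a semiring listed in \cite[Lemma 1.9]{kallenberg17}, the isomorphism $\gamma$, and Lemma \ref{lemmar-s}. Hence it suffices to exhibit one such semiring $\mathcal{I}'$ all of whose members are $M$-continuity sets, i.e.\ $\BP(M(\partial B)=0)=1$ for every $B\in\mathcal{I}'$, and to show that $(M(B_1),\ldots,M(B_k))$ is associated for every $k$ and all disjoint $B_1,\ldots,B_k\in\mathcal{I}'$; Theorem \ref{measurena} applied with $\mathcal{I}'$ then yields that $M$ is associated.

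To construct $\mathcal{I}'$, fix a countable dense set $\{x_j\}$ in $\sS$. For each $j$ the spheres $\partial B(x_j,r)$ are pairwise disjoint, so since $M$ is a.s.\ locally finite, only countably many radii can carry mass in any bounded range; a Fubini argument then gives $\BP(M(\partial B(x_j,r))>0)=0$ for Lebesgue-almost every $r$. Thus $M$-continuity balls are available at a dense set of radii around each centre, and because $\partial(A\cup B),\,\partial(A\cap B),\,\partial(A\setminus B)\subseteq\partial A\cup\partial B$, the countable semiring they generate again consists of $M$-continuity sets; the standard construction as in \cite[Lemma 1.9]{kallenberg17} can be carried out within this family to make it topological and dissecting, producing $\mathcal{I}'$.

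Finally, fix disjoint $B_1,\ldots,B_k\in\mathcal{I}'$. Since each $M_n$ is associated, Theorem \ref{measurena} gives that $(M_n(B_1),\ldots,M_n(B_k))$ is associated in the fixed positive or negative sense for every $n$. As each $B_i$ is an $M$-continuity set, the map $\mu\mapsto(\mu(B_1),\ldots,\mu(B_k))$ is vaguely continuous on a set of full $M$-measure, so the continuous mapping theorem yields $(M_n(B_1),\ldots,M_n(B_k))\overset{d}{\to}(M(B_1),\ldots,M(B_k))$ in $\R_+^k$. Theorem \ref{t:weak_conv} then shows the limit vector is associated in the same way, which, by the reduction above, completes the proof. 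The main obstacle is exactly the continuity-set issue: without restricting to $M$-continuity sets the finite-dimensional convergence can fail, so the construction of the continuity-set dissecting semiring $\mathcal{I}'$ is where the real work lies.
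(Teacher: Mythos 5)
Your proposal is correct and follows essentially the same route as the paper's proof: both reduce the claim to finite-dimensional vectors over a countable topological dissecting semiring consisting of $M$-continuity sets, apply Theorem \ref{t:weak_conv} to the weakly convergent vectors $(M_n(B_1),\ldots,M_n(B_k))$, and conclude via Theorem \ref{measurena}. The only difference is one of packaging: where you build the continuity-set semiring $\mathcal{I}'$ (good radii around a dense set) and the finite-dimensional convergence (continuous mapping theorem) by hand, the paper defines $\cS_M:=\{B\in\cS_b:\BE[M(\partial B)]=0\}$ and cites \cite[Lemma 1.9]{kallenberg17} for the existence of such a semiring inside $\cS_M$ and \cite[Theorem 4.11]{kallenberg17} for the convergence of the evaluation vectors.
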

\begin{proof}
As before, we prove only the NA case and the PA case follows analogously.
Define $\cS_M := \{ B \in \cS_b : \BE[M(\partial B)] = 0 \}$ where
  $\partial B$ is the boundary of a set $B$. Since
  $M_n \stackrel{d}{\to} M$, we have that
  $(M_n(B_1),\ldots,M_n(B_k)) \stackrel{d}{\to}
  (M(B_1),\ldots,M(B_k))$
  for all $B_1,\ldots,B_k \in \cS_M$ (see \cite[Theorem
  4.11]{kallenberg17}).  Thus by Theorem \ref{t:weak_conv}, we have
  that $(M(B_1),\ldots,M(B_k))$ is NA for all pairwise disjoint
  $B_1,\ldots,B_k \in \cS_M$.  Since $\cS_M$ is a dissecting ring and
  there exists ${\cal I} \subset \cS_M$, a countable, topological,
  dissecting, semi-ring generating the $\sigma$-field
  $\cS$ (\cite[Lemma 1.9]{kallenberg17}), by Theorem \ref{measurena},
  we have that $M$ is a NA random measure.
\end{proof}
\section{Examples}
\label{sec:examples}
In this section, we recall some known and give some new examples of
associated random measures and fields. As mentioned in the introduction, showing many of these examples are associated in the strong sense as in Definitions \ref{asso}, \ref{na-rm} and \eqref{MPA} shall require our Theorems \ref{t2.1}, \ref{t:weak_conv}, \ref{measurena} and \ref{t:NA_limit}. In an appendix, we recall some classical results related directly to applied probability models. We are not aware of many examples of NA random fields.
\subsection{Associated random fields}
\begin{example}\rm (Gaussian random measures and fields.) Suppose that
  $M$ is a Gaussian random measure on $\mathbb{S}$ such that
  $\BC[M(A),M(B)] \leq 0$ for $A$ and $B$ disjoint. Then from
  \cite[Section 3.4]{Joag-Dev1983} and our Theorem \ref{t2.1}, we have
  that $M$ is  NA. A simple special case is when
  $\mathbb{S}$ is a discrete set and $M:= \sum_{s\in\mathbb{S}}X_s\delta_s$
  where ${\mathbf X} := (X_s)_{s \in \mathbb{S}}$ is a Gaussian random
  field such that $\BC[X_s,X_t] \leq 0$ for all $s \neq t$, which implies that ${\mathbf X}$ is NA. Similarly,
  by \cite{pitt} and our Theorem \ref{t2.1}, the condition
  $\BC (X_s,X_t)\ge 0$, $s,t \in \mathbb{S}$, is necessary and sufficient for
  the random measure $M$ to be PA.
\end{example}
\begin{example}\label{dirichletseq}\rm (Dirichlet sequences) Let
$\alpha_n\ge 0$, $n\in\N$, be such that $\alpha:=\sum^\infty_{n=1}\alpha_n$
is positive and finite.
Let $X_1,X_2,\ldots$ be independent Gamma distributed random
variables with shape parameters $\alpha_1,\alpha_2,\ldots$ and scale parameter $1$.
Then $X:=\sum^\infty_{n=1}X_n$ has a Gamma distribution with shape parameter
$\alpha$ and $(X^{-1}X_n)_{n\ge 1}$ is NA. To see the latter we first assume that
there exists $m\in\N$ such that $\alpha_n=0$ for $n> m$.
Then $X^{-1}(X_1,\ldots,X_m)$ has a Dirichlet distribution with parameter
$(\alpha_1,\ldots,\alpha_m)$. Moreover, since the latter random vector
is independent of $X$, we obtain from \cite[Theorem 2.8]{Joag-Dev1983}
that it is NA. Since $(X^{-1}X_n)_{n\ge 1}$ can be almost surely
approximated by the sequences
$$
(X_1+\cdots+X_m)^{-1}(X_1,\ldots,X_m,0,0,\ldots),\quad m\ge 1,
$$
Theorem \ref{t:weak_conv} shows that $(X^{-1}X_n)_{n\ge 1}$ is NA.
\end{example}
\begin{example}\rm (Markov stochastically monotone, up-down processes, Liggett \cite{liggett}, Szekli \cite[ Section 3.8, Theorem A]{szekli}) Let $\mathbf {X}=(X(t), t\ge 0)$ be a time homogeneous Markov Feller process with values in a partially ordered Polish space $\X$ with generator $A$. If $\mathbf X$ is stochastically monotone and up-down (i.e. $Afg\ge fAg+gAf$, for non-decreasing $f,g$) and $X(0)$ is positively associated then $\mathbf X$ is PA, i.e. $(X(t_1),\ldots ,X(t_n))$ is PA as a random element of $\X ^n$, for all $t_1<\ldots <t_n$, $n\in \N$, and the invariant (stationary) distribution of $\mathbf X$ is PA (if it exists). Using our results the PA property can be extended to infinite sequences. In this class of Markov processes many particle systems (attractive) and generalized birth and death processes are included.
\end{example}
\begin{example}\label{exrint} \rm (Random integrals)
Let $I$ be a countable index set and let $\mathbb{X}$ be a partially ordered Polish space.
Suppose that $f_y\colon \mathbb{X}\to \R_+$, $y\in I$, is a family
of measurable functions and that $M$ is a random measure
on $\mathbb{X}$. Define a random field $\mathbf{X}=(X_y)_{y\in I}$
by
\begin{align*}
X_y:=\int f_y(x)\,M(dx),\quad y\in I.
\end{align*}
If $M$ is PA, then so is $\mathbf{X}$. For simple functions $f_y$, this is straightforward from the definition of PA and then for arbitrary functions one can use the standard approximation along with our weak convergence result (Theorem \ref{t:weak_conv}).
\end{example}
\subsection{Associated random measures}
\begin{example}\label{exPoi}\rm (Poisson process) Let $M$ be a Poisson
process on a Polish space $\mathbb{S}$ with a locally finite intensity
measure $\lambda$. By complete independence, $M$ is NA.
It was stated in \cite{Roy90} (referring to the author's PhD-thesis)
that $M$ is PA. We refer to \cite[Theorem 20.4]{last2017lectures}
for a general version of this result.
In the percolation literature
this is better known as the {\em Harris--FKG inequality} (see \cite{Harris1960} and \cite{Fortuin1971}).
If $\lambda$ is diffuse, then \cite[Theorem 6.14]{last2017lectures}
shows that a Poisson process
is the only simple point process with intensity measure $\lambda$
which is both PA and NA.
\end{example}
\begin{example}\label{exmixedPoi} \rm (Mixed Poisson process) Let
$\lambda$ be a locally finite measure on a Polish space $\mathbb{S}$.
Let $X\ge 0$ be a random variable and suppose that $M$ is
a point process on $\mathbb{S}$ such that a.s.\
$\BP(M\in\cdot\mid X)=\Pi_{X\lambda}$, where, for a given
locally finite measure $\nu$ on $\mathbb{S}$, $\Pi_\nu$ denotes
the distribution of a Poisson process with intensity measure $\nu$.
Then $M$ is known as a {\em mixed Poisson process}. We show that
$M$ is PA; see \cite[Example 2.1]{georgiik}. Let $f,g\colon \mathbf{\sS}\to\R$
be measurable bounded and non-decreasing. By
conditioning and Example \ref{exPoi}
\begin{align*}
\BE f(M)g(M)\ge \BE[\BE[f(M)\mid X]\,\BE[g(M)\mid X]]
=\BE[\tilde f(X)\tilde g(X)],
\end{align*}
where $\tilde f(x):=\int f(\mu)\,\Pi_{x\lambda}(d\mu)$, $x\ge 0$,
and the function $\tilde g$ is defined similarly.
A simple thinning argument (see e.g.\ \cite[Corollary 5.9]{last2017lectures})
shows that $\tilde f$ and $\tilde g$ are non-decreasing.
Since a single random variable in a totally ordered space is PA (\cite[Theorem 3.4]{Lindqvist88}),
we obtain that
$$\BE[\tilde f(X)\tilde g(X)]\ge
\BE[\tilde f(X)]\,\BE[\tilde g(X)]= \BE[f(M)]\,\BE[g(M)],$$
as asserted.
\end{example}
\begin{example}\label{exCox} \rm (Cox processes)
Let $\Lambda$ be a random measure on a Polish space $\mathbb{S}$
and let $M$ be a point process on $\mathbb{S}$ such that a.s.\
$\BP(M\in\cdot\mid \Lambda)=\Pi_{\Lambda}$. Then $M$ is known
as a {\em Cox process}. We show that if $\Lambda$ is associated,
then so  is $\Phi$. Assume first that $\Lambda$ is PA.
Let $f,g\colon \mathbf{\sS}\to\R$
be measurable bounded and non-decreasing. Since Poisson processes
are PA we have similarly as in Example \ref{exmixedPoi} that
$\BE[f(M)g(M)]\ge \BE[\tilde f(\Lambda)\tilde g(\Lambda)]$,
where $\tilde f(\nu):=\int f(\mu)\,\Pi_{\nu}(d\mu)$,
$\nu\in\mathbf{M}(\sS)$, and $\tilde g$ is defined
similarly. By the thinning properties of Poisson processes
the (measurable) functions $\tilde f$ and $\tilde g$ are non-decreasing.
Hence $\BE[\tilde f(\Lambda)\tilde g(\Lambda)]\ge \BE[f(M)]\,\BE[g(M)]$ and $M$ is PA.
Assume now that $\Lambda$ is NA and that $f$ and $g$
are measurable with respect to disjoint measurable subsets of $\sS$.
Using in the above calculation the complete independence
of a Poisson process instead of PA, and the fact that for
each measurable set $A$ the restriction $\Phi_A$ is Cox with directing
measure $\Lambda_A$, we obtain that $M$ is NA.
The PA case of this example generalizes Example \ref{exmixedPoi} and,
in fact, Theorem 5.5 in \cite{burtonw85}.
The NA case might be new, at least in this generality. Note
that our strong (functional) definition of association has been crucial
for the above arguments.
\end{example}

\begin{example}\label{expermpoint} \rm (Permanental point processes)
Assume that $\sS$ is a locally compact separable metric space
and let $\mathbf{X}=(X_s)_{s \in \mathbb{S}}$ be a Gaussian random
field. It was shown in \cite{Eisenbaum14} that the
finite-dimensional distributions of $(X^2_s)_{s \in \mathbb{S}}$ are PA
iff they are infinitely divisible. Assume this is the case
and that moreover, $\mathbf{X}$ has continuous sample paths.
Let $\mu$ be a locally finite measure on $\sS$ and define
$\Lambda:=\int\I\{s\in\cdot\}X^2_s\,\mu(ds)$.
It can be shown as in Example \ref{exintrf} below that the random measure
$\Lambda$ is PA. By Example \ref{exCox}, a Cox process $\Phi$ directed by $\Lambda$ is PA.
Such a $\Phi$ is a special case of a (1/2)-{\em permanental process};
see e.g.\ \cite[Chapter 14]{last2017lectures}.
More generally, we may consider $k$ i.i.d.\ infinitely divisible
Gaussian random fields $\mathbf{X}^1,\ldots,\mathbf{X}^k$ as above
and define $\Lambda:=\int\I\{s\in\cdot\}Y_s\,\mu(ds)$,
where $Y_s:=(X^1_s)^2+\cdots+(X^k_s)^2$. By a basic property
of association the field $(Y_s)_{s \in \mathbb{S}}$ is again PA,
so that a Cox process $\Phi$ with directing measure $\Lambda$ is PA
as well. Such a $\Phi$ is $k/2$-permanental; see again \cite[Chapter 14]{last2017lectures}.
\end{example}
\begin{example}\rm (Determinantal point processes, Lyons\  \cite[Theorem 3.7]{lyons2014determinantal})
  Let $\lambda$ be a Radon measure on a locally compact Polish space
  $\X$. Let K be a locally trace-class positive contraction on
  $L_2(\X, \lambda)$.  The determinantal point process defined by K is
  \na \ as a random measure. Well known examples of determinantal
  point processes are descents in random sequences
  (Borodin et al. \cite{borodin}), non-intersecting random
  walks (Johansson, \cite{johansson}), edges in random spanning trees (Burton and
  Pemantle \cite{burton}) and the finite and infinite Ginibre ensemble (\cite{ginibre}, see also Section \ref{s:appendix}).
\end{example}
\begin{example}\rm (Mixed sampled point processes, Last and Szekli\
  \cite[Theorem 3.3]{last-szeklina}) Suppose that
  $N:= \sum_{i=1}^{\tau}\delta_{X_i}$, where $X_i$ are i.i.d.\ on
  a Polish space $\X$ and $\tau \in \N \cup \{0\}$ is independent of
  $(X_i)_{i \geq 1}$. This is called as
{\em a mixed sampled point process}; see also \cite{last2017lectures}.
If $\tau$ has an ultra log-concave distribution, then
  $N$ is \na \ as a random point process.
  This example can be immediately extended to the case of random
  measures $M := \sum_{i=1}^{\tau}W_i\delta_{X_i},$ for an
  independent iid sequence $(W_i)$ of positive random variables. Such random measures belong to the class of random measures described in the next example.
\end{example}
\begin{example}\rm (Independently-weighted point processes) Suppose
   that $N = \{X_i\}_{i \geq 1}$ is a NA point process on $\mathbb{S}$
   and $(W_i)_{i \geq 1}$ is an independent but possibly position
dependent
   marking of $N$ with non-negative marks (see
   \cite[Section 5.2]{last2017lectures} for more details). In other
   words, given $N$, let $(W_i)$ be independent random variables chosen
   as per distribution $K(X_i,.)$, where
   $K(x,dw)$ is the
   probability kernel generating the independent marking. Define the
   random measure $M:= \sum_i W_i \delta_{X_i}$. Clearly we have
a.s.\ that $\BP(M\in\cdot\mid N)=K^*(N,\cdot)$ for a suitably defined
probability kernel $K^*$. Suppose that
$f,g\colon \mathbf{M}(\mathbb{S})\to\R $ are bounded measurable and
non-decreasing.
Assume also that there exists a measurable $A\subset \mathbb{S}$ such
that $f$ is measurable w.r.t.\ $A$ and $g$ is measurable w.r.t.\ $A^c$.
Since $M_A$ and $M_{A^c}$ are conditionally independent given $N$,
we have a.s.\ that $\BE[f(M)g(M)]=\BE[[\BE[f(M)\mid N]\,\BE[g(M)\mid N]]$.
We can define $K^*$ in such a way
that $\int f(\nu)\, K^*(\mu,d\nu)$ and $\int g(\nu)\, K^*(\mu,d\nu)$
are increasing
in $\mu$.
Therefore $M$ is NA.
\end{example}
\begin{example}\label{exintrf}\rm (Integral of random fields) Suppose $\X$ is a
  Polish space with a locally finite measure $\mu$ and
  ${\mathbf X} := (X(x))_{x \in \X}$ is a $\Y$-valued continuous
  random field where $\Y$ is a POP space. Assume that
  $(X(x))_{x\in I}$ is NA for any finite $I \subset \X$. Let
  $f : \Y \to \R_+$ be an increasing and continuous function. Then we
  have that the random measure
  $M(A) := \int_{A} f(X(x)) \mu(\md x), A \in \mathcal{S}$, is a NA
  random measure. This can be proved as follows.  Easily we have that
  $(f(X(x)))_{x\in I}$ is NA for any finite $I \subset \X$. Now, we
  approximate $M(A)$ for any $A \in \mathcal{S}_b$ as follows. Let
  $\{x_n\}_{n \geq 1}$ be a countable dense set of $\X$ and
  $B_n^k := B_{x_n}(2^{-k}) \setminus (\cup_{m=1}^{n-1}B_{x_m}(2^{-k})
  )$. Choose $y_n^k \in B_n^k$ for all $n,k$. Define
$$
M_k(A):= \int_{A} \sum_{n=1}^{\infty} \I\{x \in B_n^k\}\I\{y_n^k \in A\}f(X(y_n^k))\mu(\md x).
$$
Observe that $M_k(A)$ is an increasing function of
$\{f(X(y_n^k))\}_{y_n^k \in A}$ and by Theorem \ref{t2.1},
$\{f(X(y_n^k))\}_{y_n^k \in A}$ is a NA random field. Thus for
disjoint bounded sets $A_1,\ldots,A_m$, since $M_k(A_i)$'s are
increasing functions of disjoint collection of $f(X(y_n^k))$'s, we
have that \\ $(M_k(A_1),\ldots,M_k(A_m))$ is NA. By continuity of
$f,\mathbf{X}$ and boundedness of $A_i$'s, we can use the dominated
convergence theorem to show that for all $1 \leq i \leq m$,
$M_k(A_i) \to M(A_i)$ a.s. as $k \to \infty$. Now, by using Theorem 
\ref{t:weak_conv}, we have that $(M(A_1),\ldots,M(A_m))$ is NA for
disjoint bounded sets $A_1,\ldots,A_m$ and hence $M$ is NA by Theorem \ref{measurena}.
\end{example}
\begin{example}\label{diri}\rm (Dirichlet process) Let $\lambda$ be a measure on
$\sS$ such that $0<\lambda(S)<\infty$. A random measure $M$ on $\sS$
is called a {\em Dirichlet process} \cite{Ferguson73,last2017lectures} with
parameter measure $\lambda$ if
$(M(B_1),\ldots,M(B_n))$ has a Dirichlet distribution
with parameter $(\lambda(B_1),\ldots,\lambda(B_n))$,
whenever $B_1,\ldots,B_n$, $n\ge 1$, form a measurable partition of $\sS$.
By Example \ref{dirichletseq}, a Dirichlet process is NA. Note that
the NA property of Dirichlet sequences is in accordance with
Theorem \ref{measurena}.
\end{example}
\begin{example}\label{infdiv}\rm (Infinitely divisible random measures, Burton and
  Waymire, \cite{burton-way}, Evans, \cite{evans}) Suppose that $M$ is a random
measure
on a Polish space $\mathbb{S}$ which is infinitely divisible. This means
that for any $n\in \N$,  there exist independent identically distributed
random measures
   $M_1, \ldots,M_n$ on $\mathbb{S}$ such that $M$ has the same
distribution as
   $M_1 +\cdots +M_n$. It was shown in \cite{burton-way} and \cite{evans}
that $M$ is PA.
We give here a short proof of this result which does, moreover, not
require $\mathbb{S}$
to be locally compact. By a classical point process result
(see e.g.\ \cite[Theorem 3.20]{kallenberg17}) there exists a Poisson process
$\Phi$ on $\mathbf{M}(\mathbb{S})$ and a measure
$\lambda\in\mathbf{M}(\mathbb{S})$ such that
$M=\lambda+\int \mu\,\Phi(d\mu)$ holds a.s.
Taking measurable bounded and non-decreasing functions
$f,g\colon\mathbf{M}(\mathbb{S})\to\R$, we obtain that
\begin{align*}
\BE[f(M)g(M)]=\BE[\tilde f(\Phi)\tilde g(\Phi)],
\end{align*}
where the function $\tilde f$ (and similarly $\tilde g$) is defined as
follows.
Given a locally finite counting measure $\varphi$ on
$\mathbf{M}(\mathbb{S})$
we set $\tilde f(\varphi):=f\big(\lambda+\int \mu\,\varphi(d\mu)\big)$
whenever the measure $\int \mu\,\varphi(d\mu)$ is locally finite.
Otherwise we set $\tilde f(\varphi):=c$, where $c$ is an upper bound of $f$.
Since $\tilde f$ and $\tilde g$ are non-decreasing we can apply the
PA property of $\Phi$ (see Example \ref{exPoi}) to conclude that
$\BE[\tilde f(\Phi)\tilde g(\Phi)]\ge \BE[\tilde f(\Phi)]\,\BE[\tilde
g(\Phi)]
=\BE[f(M)]\,\BE[g(M)]$, as asserted.
\end{example}
\begin{example}\rm (Poisson cluster random measure)
Suppose that $N=\sum^\tau_{i=1}\delta_{\xi_i}$ is a Poisson process
on a Polish space $\mathbb{S}$. Let
   $(M_i, i\ge 1)$ be an i.i.d.\ sequence of random measures on
$\mathbb{S}$,
independent of $N$. Assume that
\begin{align*}
\iint \min(\mu(B+x),1)\,\BP(M_1\in d\mu)\,\BE[\Phi](dx)<\infty
\end{align*}
for all bounded Borel sets $B\subset \mathbb{S}$. By \cite[Theorem
3.20]{kallenberg17}
the random measure $M$ defined by
$M(B) =\sum^\tau_{i=1} M_i(B + \xi_i)$, $B\in\mathcal{S}$, is infinitely
divisible.
Example \ref{infdiv} shows that $M$ is PA.
\end{example}
\begin{example}\rm (Self-exciting point processes on the real axis,
  Kwiecinski and Szekli \cite[Theorem 4.2]{kwiecinski-szekli96}) Let
  $N$ be a point process on $\R_+$ admitting stochastic intensity with
  respect to its internal filtration. If $N$ is a positively
  self-exciting w.r.t.\ $\prec$, then $N$ is positively associated
  w.r.t.\ $\prec$, whenever $\prec$ denotes one of the three orderings
  of point processes introduced there. In particular renewal processes
  with inter-point distribution which has decreasing failure rate
  (DFR) are \pa \ as random measures.
\end{example}

\begin{example}\rm (Area interaction process) Let $\mathbb{S}$ be a compact subset
of $\R^d$ equipped with the Euclidean distance. Let $\beta>0$ and let $\Pi_\beta$
be the distribution of a Poisson process with intensity measure
$\beta\lambda_d$ restricted to $\mathbb{S}$, where $\lambda_d$ denotes Lebesgue
measure on $\R^d$. Fix a number $r>0$ and define
$U(\mu):=\cup_{x\in\mu}B(x,r)$, $\mu\in\mathbf{N}(\sS)$, where
$B(x,r)$ is the Euclidean ball with radius $r$ centred at $x$.
Suppose that $\Phi$ is a point process
on $\mathbb{S}$ whose distribution is absolutely continuous w.r.t.\
$\Pi_\beta$, with density proportional to
$p(\mu)=e^{-\alpha \lambda_d (U(\mu))}$, $\mu\in\mathbf{N}(\sS)$,
where $\alpha>0$ is another parameter.
Example 2.3 in \cite{georgiik} shows the finite dimensional distributions of $\Phi$ are positively associated thus using our Theorem \ref{measurena} we conclude that $\Phi$ is PA. In fact, the latter example covers
a more general class of finite Gibbs processes (of Widom--Rowlinson type) which are PA.
\end{example}
\begin{example}\rm (Exclusion processes)
The symmetric exclusion process on a countable set $S$ is the Markov process $(X_t, t\ge 0)$ on the state space $E=\{0, 1\}^S$
with the formal generator
$$
L f(\eta) =\sum _{x,y:\eta (x)=1, \eta (y)=0} p(x, y)[ f(\eta_{x,y}) - f(\eta)],\  \eta\in E,
$$
where $\eta_{x,y}$ is the configuration obtained from $\eta$ by interchanging the coordinates $\eta (x)$ and $\eta(y)$. Here $p(x, y) = p(y, x)$ are the transition probabilities for a symmetric, irreducible, Markov chain on $S$. For background on this process, see Chapter VIII of \cite{liggett}.
Let
$${\cal H} =\{\alpha : S \to [0, 1], \sum _y p(x, y)\alpha(y) = \alpha(x)\  \forall x \},
$$
and for $\alpha \in {\cal H}$, let $\nu_\alpha$ be the product measure with marginals $\nu_\alpha(\eta : \eta(x) = 1) = \alpha(x)$. Then the limiting distribution as $t\to \infty$  of the process $(X_t)$ exists if the initial distribution of $X_0$ is $\nu_\alpha$; call it  $\mu_\alpha$. It is known from \cite{borceabl} that for $\mu_\alpha$  the finite dimensional distributions are negatively associated and using our Theorem \ref{t2.1} we have that  $\mu_\alpha$ is NA.
\end{example}

\section{Appendix}\label{s:appendix}
In order to make the list of examples more complete we recall some classical results related  directly to applied probability models.

\begin{enumerate}[a)]
\item (Non-Gaussian infinitely divisible random vectors, Samorodnitsky \cite{samorodnitsky})  Let $\mathbf X$ be an infinitely divisible random vector with  L\'evy measure $\nu$ which is concentrated on the positive $(\R_+)^d$ and the negative $(\R_-)^d$ quadrants of $\R^d$ then $\mathbf X$ is \pa \ . This condition is not necessary in general but it is for some sub-classes of infdiv vectors.
\item (Max infinitely divisible random vectors, Resnick \cite{resnick}) A random vector  $\mathbf X$ is max-infinitely divisible if for every $n \in  \N$ there exist i.i.d. random vectors $\mathbf X_{n1}, \mathbf X_{n2},\ldots , \mathbf X_{nn}$ such that $\mathbf X$ is equal in distribution to $\max (\mathbf X_{n1}, \mathbf X_{n2},\ldots , \mathbf X_{nn})$.
Every max-infinitely-divisible random vector $\mathbf X$  is \pa \ .

\item  (Karlin, Rinott \cite{karlin}). If the distribution of a vector ${\mathbf X}$ has density $f$ such that $f({\bf x}\vee {\bf y})f({\bf x}\wedge {\bf y})\ge  f({\bf x}) f({\bf y}),$  for all ${\bf x},{\bf y}\in \R^n$ it is called multivariate totally positive of order 2 ($MTP_2$). An  $MTP_2$, random vector ${\mathbf X}$ induces an \pa \  set of random variables (FKG inequalities).
The following special cases are classical $MTP_2$ densities: (i) the negative multinomial discrete density; (ii)  ${\mathbf X}$ is normally distributed with mean zero and the covariance matrix $\Sigma$  is $MTP_2$ if and only $-\Sigma ^{-1}$ exhibits nonnegative
off-diagonal
elements (that is $\Sigma ^{-1}$ is so called M-matrix or Leontief matrix); (iii) the density of the eigenvalues of certain Wishart random matrices; (iv) the density of multivariate logistic distribution; (v) the density of the multivariate gamma distribution; (vi) the density of the multivariate Cauchy distribution.

\item (Virtual waiting time process, Kwiecinski and Szekli \cite[Proposition 5.1]{kwiecinski-szekli96}) Suppose that a marked point process $N$ feeding a single-server queue is positively associated as random measure. Then the processes of the virtual waiting time and  of the number of customers in the system are PA as random fields.

\item (M-infinitely divisible random sets, Karlowska-Pik and Schreiber, \cite[Theorem 2.1]{karlowska}) If M-infinitely-divisible convex compact random set $\mathbf X$ has no Gaussian summand and its L\'évy measure concentrates on the family of sets containing the origin, then $\mathbf X$ is \pa \ as a random element of the space of closed subsets of $\R^d$ equipped with the Fell topology. Similarly, every union-infinitely-divisible random closed set is \pa\ .

\item (Sojourn times on quasi overtake–free paths in queueing networks, Daduna and Szekli \cite[Theorem 4.2 and 6.4]{daduna}).
The vector of a test customer's successive sojourn times on a quasi overtake-free path in a closed Gordon-Newell queueing network is negatively associated. In particular, the vector of a test customer's successive sojourn times in a cycle is NA.

\item (Queueing networks, Szekli \ \cite[Section 3.8, Theorem E]{szekli}) Let $(X(t), t \ge 0$) denote the joint queue length process of an irreducible Gordon-Newell network with Markovian routing and queue-length dependent non-decreasing service rates, which acts in equilibrium. Then for each $t\ge  0$, $X(t)$ is NA.

\item (Eigenvalues of random matrices, Ginibre \cite{ginibre}) Let $M$ be a random matrix obtained by drawing every entry independently from the complex normal distribution. This is the complex Ginibre ensemble. The eigenvalues of $M$, which form a finite subset of the complex plane define a NA point process (which is  determinantal). If a Hermitian matrix is generated in the corresponding way, drawing each diagonal entry from the normal distribution and each pair of off-diagonal entries from the complex normal distribution, then we obtain the Gaussian unitary ensemble, and the eigenvalues are now a NA (determinantal)  point process on the real line.
\item (Van den Berg and Kesten (BK) inequality) Let $E= \{0, 1\}^n$, and  $[n]:=\{1,..., n\}$. For $\eta \in E$  and $I \subset [n]$, let $\eta_I$ denote the ‘tuple’ $ (\eta_i , i\in I)$. By $[\eta]_I := \{\alpha\in E: \alpha_I = \eta_I\}$ we denote the set of all elements of $E$ that agree with $\eta$ on $I$. $ A\square B$ defines the event that A and B occur disjointly, that is
$$
A\square B =\{\eta\in E:\exists \ {\rm disjoint}\  K, L \subset [n],  [\eta]_K \subset A ,  [\eta]_L \subset B\}.
$$
An event $A \subseteq E$ is
said to be increasing if $\eta' \in A$ whenever $\eta\in A$ and $\eta'\ge \eta$ coordinatewise. A probability measure $P$ on $E$ is BK if for all  increasing $A, B$, $P(A\square B) \le P(A)P(B)$. It is known that if $P$ is BK then it is NA but NA does not imply BK, see \cite{bk}.
\item (Distributions on vertices of polytopes in $\R^n$, Peres et al. \cite{peres}) For a Gaussian random walk in a polytope that starts at a point inside and continues until it gets absorbed at a vertex the probability distribution induced on the vertices by this random walk is NA for matroid polytopes. Such distributions are highly sought after in randomized algorithms as they imply concentration properties.
\item (Random-cluster model, Grimmett \cite{Grimmett2006}) The random cluster measure $\phi_{p,q}$ is PA for all $p \in [0,1],  q \in [1,\infty)$ with free or wired boundary conditions. For any other boundary condition, the limit random cluster measures and extreme (tail trivial) DLR random cluster measures are PA for all $p \in [0,1],  q \in [1,\infty)$ (see \cite[Theorems 4.17 and 4.37]{Grimmett2006}).  It is one of the important conjectures in statistical physics that $\phi_{p,q}$ satisfies some form of negative dependence for $p \in [0,1],  q \in (0,1)$. From our Theorem \ref{t:weak_conv}, this conjecture boils down to showing a suitable negative dependence property for the finite-volume case. However, this is shown in certain special cases of the $q \downarrow 0$ limit (see \cite[Section 3.9]{Grimmett2006}).
\item (Conditional distributions, Hu and Hu \cite{hu})
Let ${\mathbf X}=(X_1,X_2,\ldots, X_n)$ be a random vector of $n$ iid rv’s with a continuous distribution. Then  $[{\mathbf X} | X_{(k_1)} = s_1,X_{(k_2)} = s_2,\ldots, X_{(k_r)} = s_r]$ is NA for  $1\le k_1<k_2<\ldots <k_r\le n$  and $s_1<s_2<\ldots<s_r$, where $X_{(1)}\le \ldots \le X_{(n)}$ are the order statistics of ${\mathbf X}$.
If ${\mathbf X}$ is  a random vector of $n$  iid rv’s with PF2 densities or mass functions then $ [{\mathbf X} | \sum _{i=1}^n X_i \in (a, b)]$ is NA, where $a<b$. Some special cases of conditional distributions are given in the next example.
\item (Joeg-Dev and Proschan, \cite{Joag-Dev1983})
Random vectors ${\mathbf X}$ with the permutation, multinomial, multivariate hypergeometric or Dirichlet  distributions are \na . For Dirichlet, see also Example \ref{diri}.

\section*{Acknowledgments}
DY's work was supported by INSPIRE Faculty Award from DST and CPDA
grant from the Indian Statistical Institute.  DY also wishes to thank
Bartek Blaszczyszyn and Subhrosekhar Ghosh for some discussions on
negatively associated point processes. RS's work was supported by
National Science Centre, Poland, grant NCN no
2015/19/B/ST1/01152. This work was in part supported by the German
Research Foundation through Grant No.\ LA965/9-2 awarded as part of
the DFG-Forschungs\-gruppe FOR 1548 ``Geometry and Physics of Spatial Random Systems''. The authors also wish to thank an anonymous referee for pointing out errors in the proofs of Proposition 3.3 (now Lemma 3.5) and Theorem 3.6 of the earlier version. 
\end{enumerate}


\begin{thebibliography}{99}
\bibitem{alam}
Alam, K. and Lai Saxena, K.M. (1981).
\newblock Positive dependence in multivariate
distributions.
\newblock {\em Communications in Statistics A} {\bf 10}, 1183-1196.

\bibitem{bk}
Van den Berg, J. and Jonasson, J. (2012).
A BK inequality for randomly drawn subsets of fixed size.
{\em Probability Theory and Related Fields} {\bf 154}, 835-844.

\bibitem{billingsley}
Billingsley, P. (1979).
\newblock{\em Probability and Measure.}
\newblock{Wiley, New York}.

\bibitem{blaszczyszyn2009directionally}
B{\l}aszczyszyn, B. and Yogeshwaran, D.\ (2009).
\newblock Directionally convex ordering of random measures, shot noise fields,
  and some applications to wireless communications.
\newblock {\em Advances in  Applied  Probability} {\bf 41}, 623--646.

\bibitem{borceabl}
Borcea, J. Br{\"a}nd{\'e}n, P. and Liggett, T. (2009).
\newblock Negative dependence and the geometry of polynomials.
\newblock {\em Journal of the American Mathematical Society} {\bf 22}, 521--567.

\bibitem{borodin}
Borodin, A., Diaconis, P. and Fulman, J. (2010).
\newblock{On adding a list of numbers (and other one-dependent determinantal processes).}
\newblock {\em Bulletin of the American Mathematical Society} {\bf 47}, 639-670.

\bibitem{bulinski-sh}
Bulinski, A. and Shashkin, A. (2007).
\newblock {\em Limit theorems for associated random fields and related systems.}
World Scientific, Singapore.

\bibitem{bulinski-sp}
Bulinski, A. and Spodarev, E. (2013).
\newblock{\em Central Limit Theorems for Weakly Dependent Random Fields.}
\newblock{Lecture Notes in Mathematics 2068, Chapter 10, Springer, Berlin.}

\bibitem{burton}
Burton, R and Pemantle, R. (1993).
\newblock{Local Characteristics, Entropy and Limit Theorems for Spanning Trees and Domino Tilings Via Transfer-Impedances.}
\newblock{\em Annals of Probability} {\bf 21}, 1329-1371.

\bibitem{burtonw85}
Burton, R., and Waymire, E. (1985). Scaling limits for associated random measures. {\em Annals of Probability} {\bf 13(4)}, 1267-1278.

\bibitem {burton-way}
Burton, R. M., and Waymire, E. (1986). The central limit problem for infinitely divisible random measures. {\em Dependence in Probability and Statistics}, 383-395, ed. E. Eberlein and M. Taqqu,  Birkh\"auser.


\bibitem{daduna}
Daduna, H. and Szekli, R. (2004).
On the Correlation Structure of Closed Queueing Networks. {\em Stochastic Models} {\bf 20}, 1-29.

\bibitem{Eisenbaum14}
Eisenbaum, N. (2014). Characterization of positively correlated squared Gaussian processes. {\em Annals of Probability}, {\bf 42(2)}, 559-575.

\bibitem{epw}
Esary, J.D., Proschan, F. and Walkup, D.(1967). \newblock Association of random variables with applications.
\newblock {\em Annals of Mathematical Statistics.} {\bf 38}, 1466-1474.

\bibitem{evans}
Evans, S.N. (1990).
Association of random measures. {\em Probability Theory and Related Fields} {\bf 86}, 1-19.

\bibitem{Ferguson73}
Ferguson, T.S.\ (1973). A Bayesian analysis of some nonparametric problems.
{\em Annals of Statistics} {\bf 1}, 209-230.

\bibitem{Fortuin1971}
Fortuin, C. M., Kasteleyn, P. W. and Ginibre, J. (1971).
\newblock Correlation inequalities on some partially ordered sets. \newblock {\em Communications in Mathematical Physics}, {\bf 22(2)}, 89-103.

\bibitem{georgiik}
Georgii, H-O. and Kueneth, T. (1997).
Stochastic comparison of point random fields.
{\em Journal of Applied Probability} {\bf 34}, 868-881.

\bibitem{GeorYoo05}
Georgii, H.-O. and Yoo, H.J. (2005).
Conditional intensity and Gibbsianness of determinantal point processes.
{\em Journal of  Statistical Physics} {\bf 118}, 55-–84.

\bibitem{ginibre}
Ginibre, J. (1965).
\newblock{Statistical ensembles of complex, quaternion, and real matrices}
\newblock{\em Journal of Mathematical Physics.} {\bf 6}, 440-449.

\bibitem{Grimmett2006}
Grimmett, G. R. (2006).
\newblock{\em The Random-Cluster Model}
\newblock (Vol. 333). Springer Science \& Business Media.

\bibitem{Harris1960}
Harris, T. E. (1960).
\newblock A lower bound for the critical probability in a certain percolation process.
\newblock{\em Mathematical Proceedings of the Cambridge Philosophical Society.} {\bf 56}, No. 1, 13-20.

\bibitem{Hoeffding40}
H\"{o}ffding, W. (1940).
\newblock Masstabinvariante korrelationstheorie.
\newblock {\em Schriften des Mathematischen Instituts und Instituts fur Angewandte Mathematik der Universitat Berlin}, {\bf 5}, 181--233.

\bibitem{hu}
Hu, T and Hu, J. (1999).
Sufficient conditions for negative association of random variables.
{\em Statistics and Probability Letters} {\bf 45}, 167-173.

\bibitem{Joag-Dev1983}
Joag-Dev, K. and Proschan, F. (1983).
\newblock {Negative association of random variables with applications.}
\newblock {\em Annals of Statistics} {\bf 11}, 286-295.

\bibitem{johansson}
Johansson, K. (2004).
\newblock {Determinantal processes with number variance saturation.}
\newblock {\em Communications in Mathematical Physics} {\bf 252}, 111-148.

\bibitem{Kallenberg02}
Kallenberg, O.\ (2002).
{\em Foundations of Modern Probability}.
Second Edition, Springer, New York.

\bibitem{kallenberg17}
Kallenberg, O. (2017).
\newblock{\em Random Measures, Theory and Applications}.
\newblock{ Springer International Publishing Switzerland}.

\bibitem{kamaeko}
Kamae, T. , Krengel, U. and O'Brien, G. (1977).
\newblock Stochastic inequalities on partially ordered spaces.
\newblock {\em Annals of Probability} {\bf 5}, 899-912.

\bibitem{kamaek}
Kamae, T. and Krengel, U. (1978).
\newblock Stochastic partial ordering.
\newblock {\em Annals of Probability} {\bf 6}, 1044-1049.

\bibitem{karlin}
Karlin, S. and Rinott, Y. (1980).
Classes of orderings of measures and related cor- relation inequalities. I. {\em Journal of. Multivariate Analysis} {\bf 10},  467–498.

\bibitem{karlowska}
Karlowska-Pik, J. and Schreiber, T. (2008).
Association criteria for M-infinitely-divisible and U-infinitely-divisible random sets.
{\em Probability and Mathematical Statistics} {\bf 28}, 169-178.


\bibitem{kwiecinski-szekli96}
Kwiecinski, A. and Szekli, R. (1996).
Some monotonicity and depedndence properties of self-exciting point processes.
\newblock {\em Annals of Applied Probability} {\bf 6}, 1211-1231.

\bibitem{last2017lectures}
Last, G. and Penrose, M.\ (2017).
\newblock {\em Lectures on the Poisson Process.}
\newblock Cambridge University Press.

\bibitem{last-szeklina}
Last, G. and Szekli, R. (2018).
\newblock On negative association of some finite point processes on general state spaces.
\newblock To appear in {\em Journal of Applied Probability}.

\bibitem{Lehmann66}
Lehmann, E. L. (1966).
\newblock Some concepts of dependence.
\newblock {\em Annals of Mathematical  Statistics}  {\bf 37}, 1137--1153.

\bibitem{liggett}
Liggett, T. (2005).
{\em Interacting Particle Systems.}
Springer, Berlin Heidelberg.

\bibitem{Lindqvist88}
Lindqvist, B.H.\ (1988).
Association of probability measures on partially ordered spaces.
{\em Journal of  Multivariate Analysis} {\bf 26}, 111--132.

\bibitem{lindvall}
Lindvall, T. (1999).
\newblock On Strassen's theorem on stochastic domination.
\newblock {\em Electronic Communications in Probability} {\bf 4}, 51-59.

\bibitem{lyons2014determinantal}
Lyons, R. (2014).
\newblock Determinantal probability: basic properties and conjectures.
\newblock {\em Proc. Intl. Congress Math., 2014}, vol. IV, 137--161;  arXiv:1406.2707v1.

\bibitem{mullercomparison}
M{\"u}ller, A. and Stoyan, D. (2002).
\newblock {\em Comparison Methods for Stochastic Models and Risks.}
\newblock {John Wiley and Sons Ltd., Chichester\/}.

\bibitem{Nachbin65}
Nachbin, L. (1965).
\newblock {\em Topology and order.}
\newblock {(Vol. 4). Krieger Pub Co.}

\bibitem{noebels}
Noebels, R. (1981)
\newblock A note on stochastic order probability measures and an application for Markov processes.
\newblock{\em Zeitschrift fuer Operations Research} {\bf 25,} 35-43.

\bibitem{Pemantle2000}
Pemantle, R. (2000).
\newblock Towards a theory of negative dependence.
\newblock {\em Journal of Mathematical Physics} {\bf 41,} 1371--1390.

\bibitem{peres}
Peres, Y., Singh, M. and Vishnoi, N. (2016).
Random Walks in Polytopes and Negative Dependence.
{\em 42nd CVIT 2016. Article No. 23, pp. 1-10}.  Leibniz International Proceedings in Informatics Schloss Dagstuhl. Leibniz-Zentrum für Informatik, Dagstuhl Publishing, Germany

\bibitem{pitt}
Pitt, L. D. (1982).
\newblock{Positively correlated normal variables are associated.}
\newblock{\em Annals of  Probability} {\bf 10}, 496-499.

\bibitem{poinas2017mixing}
Poinas, A., Delyon, B. and Lavancier, F. (2017).
\newblock Mixing properties and central limit theorem for associated point processes.
\newblock  arXiv:1705.02276.

\bibitem{resnick}
Resnick, S. (1988). Association and multivariate extreme value distributions, {\em Australian Journal of Statistics} {\bf 30A},  261-271.

\bibitem{rolski-szekli}
Rolski, T. and Szekli, R. (1991).
\newblock Stochastic ordering and thinning of point processes.
\newblock {\em Stochastic Processes and Their Applications} {\bf 37}, 299-312.

\bibitem{Roy90}
Roy, R.\ (1990).
The Russo--Seymour--Welsh theorem and the equality of critical densities
and the
``dual'' critical densities for continuum percolation on $\R^2$.
{\em Annals of Probability} {\bf 18}, 1563-1575.

\bibitem{ludger}
Ruechendorf, L. (1981)
\newblock Stochastically ordered distributions and monotonicity of the OC-function of sequential probability ratio tests.
\newblock{\em Math. Operationsforch. Statist., Ser. Statistics} {\bf 12}, 327-338. 

\bibitem{samorodnitsky}
Samorodnitsky, G. (1995).
\newblock Association of infinitely divisible random vectors,
\newblock {\em Stochastic Processes and Their Applications} {\bf 55},  45-55.

\bibitem{strassen}
Strassen, V. (1965).
\newblock The existence of probability measures with given marginals.
\newblock {\em Annals of Mathematical Statistics} {\bf 36}, 423-439.

\bibitem{szekli}
Szekli, R. (1995).
\newblock {\em Stochastic Ordering and Dependence in Applied Probability.}
\newblock {Lecture Notes in Statistics 97, Springer.}

\end{thebibliography}
\end{document}